  \definecolor{darkgreen}{rgb}{0,0.4,0}
\DeclareMathOperator{\id}{id}
\def\lie#1{{\mathcal L}_{#1}}
\def\fpd#1#2{\frac{\partial #1}{\partial #2}}
\def\R{{\mathbb R}}
\def\SODE{{\textsc{sode}}}
\def\hook{{\mathchoice{\vrule height 0pt depth 0.4pt width 3pt
\vrule height 5pt depth 0.4pt \kern 3pt} {\vrule height 0pt depth
0.4pt width 3pt \vrule height 5pt depth 0.4pt \kern 3pt} {\vrule
height 0pt depth 0.2pt width 1.5pt \vrule height 3pt depth 0.2pt
width 0.2pt \kern 1pt} {\vrule height 0pt depth 0.2pt width 1.5pt
\vrule height 3pt depth 0.2pt width 0.2pt \kern 1pt} }}
\def\d{\mbox{d}}%new command GP
\def\tGamma{{\tilde\Gamma}}
\def\det{\text{det}}
\theoremstyle{plain}
\newtheorem{thm}{Theorem}[section]
\newtheorem{lem}[thm]{Lemma}
\newtheorem{propn}[thm]{Proposition}
\newtheorem{cor}[thm]{Corollary}
\theoremstyle{definition}
\newtheorem{defn}[thm]{Definition}
\newtheorem{ex}[thm]{Example}
\newcommand{\pd}[2]{\frac{\partial#1}{\partial#2}}
\newcommand{\bnabla}{\bar{\nabla}}
\newcommand{\hnabla}{\hat{\nabla}}
\newcommand{\tnabla}{\tilde{\nabla}}
\newcommand{\Pnabla}{{\nabla}^P}
\newcommand{\Pbnabla}{{\bar\nabla}^P}
\newcommand{\tH}{\tilde{H}}
\newcommand{\tV}{\tilde{V}}
\newcommand{\tE}{\tilde{E}}
\newcommand{\tPhi}{\tilde{\Phi}}
\newcommand{\tpsi}{\tilde{\psi}}
\newcommand{\PH}{P_H}
\newcommand{\PV}{P_V}
\newcommand{\Sp}{\text{Sp}} %new command GP
\newcommand{\Img}{\text{\rm im}}
\newcommand{\X}{\mathfrak{X}}
\newcommand{\F}{\mathfrak{F}}
\newcommand{\M}{\mathcal{M}}
\begin{document}

\title{Linear connections and shape maps for second order ODEs\\ with and without constraints}

\author{G.E.\,Prince, M.\,Farr\'{e} Puiggal\'{i}, D.J.\,Saunders, D.\,Mart\'{i}n de Diego}

%\date{July 4, 2021}

\maketitle

\begin{quote}
{\bf Abstract.} {\small  We deal with the construction of linear connections associated with second order ordinary differential equations with and without first order constraints. We use a novel method allowing glueing of submodule covariant derivatives to produce new, closed form expressions for the Massa-Pagani connection and our extension of it to the constrained case.\\
{\em Subject Classification (2020):} {Primary 34A26, 53C05, 70G45 Secondary 34C40, 53B05}\\
{\em Keywords:} {Linear connections, covariant derivatives, Massa-Pagani connection, second order ordinary differential equations, nonholonomic mechanics}}
\end{quote}

\section{Introduction}
The purpose of this paper is to construct and examine specific linear connections for general systems of second order differential equations (\SODE s) in both the absence and presence of first order constraints. While there is significant achievement in the former, unconstrained case (e.g., see \cite{CMS96,JP01,MaPa94,MS01,SM00}, the constrained case has proven problematic (see \cite{SCS95,SCS97,SSC96}). We believe that in this paper we have successfully produced a canonical linear connection for constrained systems using a new construction in the unconstrained case and the underlying geometry on the base constraint manifold. The geometric setting in the unconstrained case will be that of the evolution space of a base configuration manifold in whose coordinates the \SODE s are expressed. In the constrained case we take the cartesian product of this configuration space with the base constraint manifold.

We refer the reader to the recent papers by Mart\'{\i}nez \cite{EM18,EM19} for the historical background and geometric construction of the linearisation of nonlinear connections on vector bundles. While we do give a geometric, jet bundle description of our constrained connection, overall our approach uses covariant derivatives, relying on an apparently new result which allows covariant derivatives on distributions or submodules to be glued together. This method allows us to deal directly with the horizontal and vertical distributions (on the evolution space) provided by  the well-known {\em nonlinear} connection for unconstrained \SODE s. Each of these distributions comes with a natural covariant derivative and these may be extended and glued together to give a covariant derivative/linear connection on the whole evolution space. The construction in the constrained case simply requires an additional submodule covariant derivative.

This approach gives a straightforward formula for the covariant derivative of the well-known linear connection of Massa-Pagani and for our extension of it to the constrained case. Massa and Pagani introduced their connection by identifying it as the unique connection with certain natural geometric properties; while they computed the component-wise covariant derivatives they did not give an intrinsic formula for the covariant derivative itself.  Until now the formulae for the Massa-Pagani connection and other related Berwald connections were defined on a certain pull-back bundle and then lifted to the evolution space. Our approach gives the ``missing" intrinsic formulae on the evolution space without recourse to pull-back bundle constructs. The modification to the constrained case is then straightforward.

%\color{blue}\question{David M's new context paras}
Our main motivation for studying linear connections associated to a SODE  subjected to first order constraints comes from nonholonomic mechanics \cite{NeiFuf72, Bloch03, Cortes2002}. Nonholonomic systems are dynamical systems on the tangent bundle of a manifold with constraints in the velocities that are usually defined by a non-integrable distribution. This non-integrability  property implies that the constraints do not impose restrictions on the possible configurations of the system, in contrast to holonomic constraints.

Examples of nonholonomic systems typically appear in rolling motion without slipping being widely used in robotic applications. Moreover, one of the aspects that makes nonholonomic dynamics very interesting  is that the equations are derived using the non-variational Lagrange-d'Alembert principle \cite{Arnold}.
This opens the question of  trying to characterise when a given nonholonomic system admits  a purely Lagrangian or Hamiltonian formulation in the manner of the classical {\em inverse problem in the calculus of variations}, see \cite{D41, KP08}.  A recent approach consists in trying to view the trajectories of the nonholonomic system as the restriction to the constraint submanifold of the trajectories of a Lagrangian system, that is, the Euler-Lagrange equations of a Lagrangian gives us the nonholonomic equations when we restrict the initial conditions to the constraint submanifold  \cite{BFMM15, BlFeMe09}, other approaches include Chaplygin Hamiltonisation \cite{Garcia-Naranjo16}.
	
One of the most successful  breakthroughs  in the study the inverse problem in calculus of variations was made  by applying the method of exterior differential systems \cite{AnTh92} and an important object for subsequent work was precisely  the  connection of Massa and Pagani \cite{MaPa94} (see also \cite{DP16,DP21}). Therefore, we believe that the linear connection we are defining here for a constrained SODE is the first step in the geometrical study the inverse problem for nonholonomic dynamics.
%\color{black}

This paper is structured as follows. In the next section we identify how to construct a covariant derivative from component submodule derivatives and use the result to describe the torsion, curvature and shape map of such a connection. In the third section we turn to \SODE s without constraints, firstly setting up the geometric structure for their description and then demonstrating the explicit formula for the Massa-Pagani connection using the construction in section 2. We also explicitly show that our covariant derivative is the unique such derivative satisfying Massa and Pagani's original geometric conditions. We also compute the torsion and shape map for the connection.

We turn to constrained systems of \SODE s in section four beginning with the geometrical setup. Then we give a jet bundle description of the construction of a linear connection in the presence of constraints. We follow this with the explicit formula for such a linear connection using the results of sections two and three, giving a natural and transparent generalisation of the unconstrained case. Again we identify natural geometric conditions which uniquely specify the derivative {\em à la} Massa and Pagani.
We then demonstrate how our constrained equations arise in the non-holonomic context. In the final section we give a number of examples, focussing on constrained systems from non-holonomic mechanics.

\section{The construction of covariant derivatives}
We present an apparently new construction of a covariant derivative on a manifold from covariant derivatives on (vector) distributions or submodules. We begin by demonstrating an extension of these submodule derivatives. (In the remainder of the paper these derivatives are generically referred to as `submodule derivatives' even if the context is that of distributions.)%\question{submodule vs distribution clarification}

Let $\M$ be a manifold and $P$ a projector of the module of vector fields, $\X(\M)$, onto a given distribution or submodule.
Suppose that $\Img(P)$ has a (non-trivial) covariant derivative $\Pnabla$, that is $\Pnabla_XY$ has the usual properties for $X,Y\in \Img(P)$ over the smooth functions, $\mathfrak{F}(\M),$ on $M$, namely $\R$-linearity in both arguments and
\begin{equation}\label{cov deriv props}
\Pnabla_{fX}Y=f\Pnabla_XY \ \text{and}\ \Pnabla_X(fY)=X(f)Y+f\Pnabla_XY
\end{equation}
Notes: It is not required that $\Pnabla_XY \in \Img(P)$). While any projector onto the fixed submodule could be used to label the covariant derivative, in what follows $P$ must also be fixed.

$\Pnabla$ can be extended as follows:

\begin{propn}\label{propn1}
Define $\Pbnabla_XY$ for $X\in\X(\M), Y\in \Img(P)$ over $\F(\M)$ by
\begin{equation}
\Pbnabla_XY:=\Pnabla_{P(X)}Y+P([X-P(X),Y]).
\end{equation}
Then $\Pbnabla_XY$ has the covariant derivative properties.
\end{propn}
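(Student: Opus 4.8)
The plan is to verify directly the three defining properties of a covariant derivative collected in \eqref{cov deriv props}: $\R$-linearity in each argument, $\F(\M)$-linearity in the first argument, and the Leibniz rule in the second. The $\R$-bilinearity is immediate, since $P$, the Lie bracket, and $\Pnabla$ are all $\R$-linear, so the whole burden falls on the two $f$-dependent identities. The structural facts I will use are only these: $P$ is a projector of the $\F(\M)$-module $\X(\M)$, hence $\F(\M)$-linear with $P\circ P=P$, so that $P(X-P(X))=0$ for every $X$ and $P(Y)=Y$ for every $Y\in\Img(P)$; the Leibniz rules $[fX,Y]=f[X,Y]-Y(f)X$ and $[X,fY]=X(f)Y+f[X,Y]$ for the Lie bracket; and the assumed properties \eqref{cov deriv props} of $\Pnabla$ on $\Img(P)$.

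For $\F(\M)$-linearity in the first slot I expand $\Pbnabla_{fX}Y=\Pnabla_{P(fX)}Y+P([fX-P(fX),Y])$. Since $P(fX)=fP(X)$, the first term equals $f\Pnabla_{P(X)}Y$ by the first identity in \eqref{cov deriv props}; in the second term $fX-P(fX)=f(X-P(X))$, and the bracket Leibniz rule turns $P([f(X-P(X)),Y])$ into $fP([X-P(X),Y])-Y(f)\,P(X-P(X))$. The projector identity $P(X-P(X))=0$ annihilates the anomalous term, leaving exactly $f\Pbnabla_XY$.

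For the Leibniz rule in the second slot I expand $\Pbnabla_X(fY)=\Pnabla_{P(X)}(fY)+P([X-P(X),fY])$; note $fY\in\Img(P)$ since $\Img(P)$ is a submodule, so the first term is legitimate and equals $P(X)(f)\,Y+f\Pnabla_{P(X)}Y$ by the second identity in \eqref{cov deriv props}. The bracket Leibniz rule rewrites the second term as $(X-P(X))(f)\,P(Y)+fP([X-P(X),Y])$, and here $P(Y)=Y$ because $Y\in\Img(P)$. Combining the two coefficients of $Y$ gives $\big(P(X)(f)+(X-P(X))(f)\big)Y=X(f)Y$, while the remaining terms reassemble into $f\Pbnabla_XY$, which is the desired identity.

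I do not expect a genuine obstacle: the content of the statement is precisely that the two mechanisms by which tensoriality could fail — the non-tensorial term $-Y(f)X$ produced by $[fX,Y]$, and the appearance of $P(Y)$ in place of $Y$ — are exactly neutralised by the projector identities $P(X-P(X))=0$ and $P|_{\Img(P)}=\id$. The only point worth checking carefully is that the formula is well posed to begin with: $P(X)$ and $Y$ both lie in $\Img(P)$, so $\Pnabla_{P(X)}Y$ is defined, and $P([X-P(X),Y])$ is merely the image under $P$ of an ordinary vector field, so no integrability or compatibility hypothesis is needed; one should also record, consistent with the Note after \eqref{cov deriv props}, that $\Pbnabla_XY$ need not belong to $\Img(P)$.
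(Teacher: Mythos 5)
Your argument is correct and coincides with the paper's own proof: both verify $\F(\M)$-linearity in the first slot via $P(fX)=fP(X)$ together with the identity $P(X-P(X))=0$ (i.e.\ $P^2=P$), and the Leibniz rule in the second slot via the bracket Leibniz rule and $P(Y)=Y$ for $Y\in\Img(P)$. Your extra remarks on well-posedness are consistent with the paper's note and add nothing essentially different.
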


\begin{proof} The $\R$-linearity properties are obvious.
\begin{align*}
\Pbnabla_{fX}Y&=\Pnabla_{P(fX)}Y+P([fX-P(fX),Y])\\
&=f(\Pnabla_{P(X)}Y+P([X-P(X),Y]))-Y(f)P(X-P(X))\\
&=f\Pbnabla_XY \quad\text{since}\ P^2=P.
\end{align*}
\begin{align*}
\Pbnabla_X(fY)&=\Pnabla_{P(X)}(fY)+P([X-P(X),fY])\\
&=P(X)(f)Y+f\Pnabla_{P(X)}Y+fP([X-P(X),Y])+(X-P(X))(f)P(Y)\\
&=X(f)Y+f\Pbnabla_XY \quad \text{since}\ Y\in \Img(P).
\end{align*}
\end{proof}
Now suppose that $T\M$ has a direct sum decomposition into $N$ submodules or distributions with projectors $P_A, A=1,\dots,N.$
If each such submodule has a (non-trivial) covariant derivative $\nabla^A$ with extension ${\bar\nabla}^A$ as defined in proposition \ref{propn1} then
\begin{thm} \label{thm1} For $X,Y\in\X(\M)$
\[\nabla_XY:=\sum_{A=1}^N {\bar\nabla}^A_X(P_A(Y))\]
has the covariant derivative properties over $\F(\M).$
\end{thm}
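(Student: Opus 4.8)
The plan is to verify directly that the operator
$\nabla_XY=\sum_{A=1}^N\bar\nabla^A_X(P_A(Y))$
has the three defining properties of a covariant derivative, exploiting the fact that Proposition \ref{propn1} has already upgraded each submodule derivative $\nabla^A$ to an object $\bar\nabla^A$ which is a genuine covariant derivative in its first slot (accepting any $X\in\X(\M)$) and whose second slot accepts any element of $\Img(P_A)$. The one structural observation doing all the work is that, for every $Y\in\X(\M)$ and every $A$, the vector field $P_A(Y)$ lies in $\Img(P_A)$, so each summand $\bar\nabla^A_X(P_A(Y))$ is legitimately defined by Proposition \ref{propn1}, and that $\sum_{A=1}^N P_A=\id_{T\M}$ because the decomposition is a direct sum.

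With this in hand the three checks are short. The $\R$-bilinearity is immediate, since each $\bar\nabla^A$ is $\R$-bilinear and each $P_A$ is $\F(\M)$-linear, hence the finite sum is $\R$-linear in $X$ and in $Y$. For $\F(\M)$-homogeneity in the first argument, I would write $\nabla_{fX}Y=\sum_A\bar\nabla^A_{fX}(P_A(Y))$ and apply the identity $\bar\nabla^A_{fX}Z=f\bar\nabla^A_XZ$ from Proposition \ref{propn1} with $Z=P_A(Y)\in\Img(P_A)$, pulling $f$ out of each term to get $f\nabla_XY$. For the Leibniz rule in the second argument, I would use the $\F(\M)$-linearity of the projectors to rewrite $P_A(fY)=fP_A(Y)$, then invoke the Leibniz property of $\bar\nabla^A$ (again admissible because $P_A(Y)\in\Img(P_A)$) to obtain $\bar\nabla^A_X(fP_A(Y))=X(f)P_A(Y)+f\bar\nabla^A_X(P_A(Y))$; summing over $A$ and using $\sum_A P_A(Y)=Y$ reassembles the term $X(f)Y$, leaving $\nabla_X(fY)=X(f)Y+f\nabla_XY$.

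I do not expect a genuine obstacle: the real content has already been extracted in Proposition \ref{propn1}, and the only points requiring attention are the bookkeeping that ensures each $\bar\nabla^A$ is evaluated on an admissible pair $\bigl(X,P_A(Y)\bigr)$, and the reassembly step that relies on the direct-sum hypothesis $\sum_A P_A=\id$. It is worth remarking, as after Proposition \ref{propn1}, that no claim is made about $\nabla_XY$ lying in any particular submodule $\Img(P_A)$, so the construction genuinely produces a covariant derivative on all of $T\M$ rather than merely a family of partial connections.
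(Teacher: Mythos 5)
Your proposal is correct and follows essentially the same route as the paper's proof: pull $f$ out of the first slot using the homogeneity established in Proposition \ref{propn1}, and for the second slot use $P_A(fY)=fP_A(Y)$, the Leibniz property of each $\bar\nabla^A$, and the direct-sum identity $\sum_A P_A=\id_{T\M}$ to reassemble $X(f)Y$. The additional bookkeeping remark that each pair $\bigl(X,P_A(Y)\bigr)$ is admissible is implicit in the paper but worth stating.
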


\begin{proof}  Again, $\R$-linearity is obvious.
\begin{align*}
\nabla_{fX}Y&:=\sum_{A=1}^N {\bar\nabla}^A_{fX}(P_A(Y))\\
&=\sum_{A=1}^N f{\bar\nabla}^A_X(P_A(Y))=f\nabla_XY.
\end{align*}
\begin{align*}
\nabla_{X}(fY)&:=\sum_{A=1}^N {\bar\nabla}^A_X(P_A(fY))\\
&=\sum_{A=1}^N (f{\bar\nabla}^A_X(P_A(Y))+ X(f)P_A(Y))\\
&=f\nabla_XY+X(f)Y \quad \text{since}\ \sum_{A=1}^NP_A=I_{T\M}.
\end{align*}

\end{proof}
The following corollary will be useful in the direct sum situation of theorem \ref{thm1}.%\question{proof repaired}

\begin{cor}\label{cor2.3}
$\nabla P_B=0$ for all $B\in \{1,\dots,N\}$ if and only if $\nabla^B_XY\in\Img(P_B)$ for all $X,Y\in\Img(P_B)$ and for each $B\in\{1,\dots,N\}.$
\end{cor}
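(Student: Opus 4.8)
The plan is to unwind the tensorial identity $\nabla P_B=0$, which means $(\nabla_XP_B)(Y)=\nabla_X(P_B(Y))-P_B(\nabla_XY)=0$ for all $X,Y\in\X(\M)$, using the formula of Theorem~\ref{thm1} together with the direct sum relations $P_A\circ P_B=\delta_{AB}P_B$, $P_A^2=P_A$ and $\sum_AP_A=I_{T\M}$. Since $P_A(P_B(Y))=\delta_{AB}P_B(Y)$, the sum defining $\nabla_X(P_B(Y))$ collapses to the single term $\bar\nabla^B_X(P_B(Y))$ (the other summands vanish, as $\bar\nabla^A_X(0)=0$), while $P_B(\nabla_XY)=\sum_{A}P_B\bigl(\bar\nabla^A_X(P_A(Y))\bigr)$. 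Hence the first step is to record that $\nabla P_B=0$ is equivalent to
\[
\bar\nabla^B_X(P_B(Y))=\sum_{A=1}^NP_B\bigl(\bar\nabla^A_X(P_A(Y))\bigr)\qquad\text{for all }X,Y\in\X(\M).
\]

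Second, I would translate the right-hand condition of the corollary into a statement about the \emph{extended} derivatives $\bar\nabla^A$. In Proposition~\ref{propn1} the term $P_A([X-P_A(X),Y])$ always lies in $\Img(P_A)$, so for $Y\in\Img(P_A)$ one has $\bar\nabla^A_XY\in\Img(P_A)$ if and only if $\nabla^A_{P_A(X)}Y\in\Img(P_A)$; and since $P_A$ restricts to the identity on $\Img(P_A)$, the vector field $P_A(X)$ sweeps out all of $\Img(P_A)$ as $X$ runs through $\X(\M)$. Therefore, for each fixed $A$, the hypothesis ``$\nabla^A_XY\in\Img(P_A)$ for all $X,Y\in\Img(P_A)$'' is equivalent to ``$\bar\nabla^A_XY\in\Img(P_A)$ for all $X\in\X(\M),\ Y\in\Img(P_A)$'', and it is this reformulated version I will compare with the displayed identity.

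Third comes the two implications. For ($\Leftarrow$), assuming each $\bar\nabla^A$ preserves $\Img(P_A)$, every summand with $A\neq B$ in $\sum_AP_B(\bar\nabla^A_X(P_A(Y)))$ vanishes, because $\bar\nabla^A_X(P_A(Y))\in\Img(P_A)$ and $P_B\circ P_A=0$, whereas the $A=B$ summand equals $\bar\nabla^B_X(P_B(Y))$ since that vector already lies in $\Img(P_B)$; this is precisely the displayed identity, so $\nabla P_B=0$, and $B$ was arbitrary. For ($\Rightarrow$), assume $\nabla P_B=0$ for every $B$, fix $B$, and take $Y\in\Img(P_B)$ so that $P_A(Y)=\delta_{AB}Y$; then $\nabla_XY=\bar\nabla^B_XY$, and $\nabla P_B=0$ forces $\nabla_XY=P_B(\nabla_XY)$, i.e. $\bar\nabla^B_XY\in\Img(P_B)$ for every $X\in\X(\M)$; by the second step this is exactly the asserted condition for $B$.

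The step I expect to be the real (if modest) obstacle is controlling the cross terms $P_B(\bar\nabla^A_X(P_A(Y)))$ with $A\neq B$ in the ($\Leftarrow$) direction: these are governed by the image condition for $\nabla^A$ rather than for $\nabla^B$, so the hypothesis must be invoked for all indices simultaneously — which is exactly why the corollary reads ``$\nabla P_B=0$ for all $B$'' on one side and ``for each $B$'' on the other, and why a single-index version of the equivalence would fail. Everything else is routine manipulation of the idempotents $P_A$ and of the Leibniz rule for $\nabla$ acting on the $(1,1)$-tensor $P_B$.
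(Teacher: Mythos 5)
Your proposal is correct and takes essentially the same approach as the paper: both arguments unwind $(\nabla_X P_B)(Y)$ using the formula of Theorem \ref{thm1}, the definition of the extensions from Proposition \ref{propn1}, and the projector algebra $P_A\circ P_B=\delta_{AB}P_B$, $\sum_A P_A=I$. The only cosmetic differences are that you keep the extended derivatives $\bnabla^A$ throughout (via the small observation that the image condition for $\nabla^A$ and for $\bnabla^A$ are equivalent) where the paper expands them explicitly, and in the converse you exploit $\nabla P_B=0$ for the index $B$ itself with $Y\in\Img(P_B)$, whereas the paper applies it for the complementary indices $B\neq C$ with $X,Y\in\Img(P_C)$ — both are valid readings of the same identity.
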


\begin{proof}
\begin{align*}
(\nabla_XP_B)(Y)&=\nabla_X(P_B(Y))-P_B(\nabla_XY)\\
&=\sum^N_{A=1}\bnabla^A_X(P_A(P_B(Y)))-P_B\left(\sum^N_{A=1}\bnabla^A_X(P_A(Y))\right)\\
&=\bnabla^B_X(P_B(Y))-P_B\left(\sum^N_{A=1}\left(\nabla^A_{P_A(X)}(P_A(Y))+P_A([X-P_A(X),P_A(Y)])\right)\right)\\
&=\bnabla^B_X(P_B(Y))-P_B\left(\sum^N_{A=1}\nabla^A_{P_A(X)}(P_A(Y))\right)-P_B([X-P_B(X),P_B(Y)])\\
&=\nabla^B_{P_B(X)}(P_B(Y))-P_B\left(\sum^N_{A=1}\nabla^A_{P_A(X)}(P_A(Y))\right)
%&=P_B\left(\sum_{A\neq B}\nabla^A_{P_A(X)}P_A(Y)\right)
\end{align*}

So, if $\nabla^A_XY \in \Img(P_A)$ for all $X,Y\in \Img(P_A)$ then the right-hand side is zero. Conversely, if $(\nabla_XP_B)(Y)=0$ for all $B,X,Y$ then for fixed $C$ and for $X,Y\in\Img(P_C)$ we have for all $B\neq C$
\[
0=-P_B(\nabla^C_{P_C(X)}(P_C(Y)))=-P_B(\nabla^C_XY).
\]
Hence $\nabla^C_XY\in\Img(P_C)$  for all $X,Y\in \Img(P_C),$ recalling that $I_{T\M}=\sum_{A=1}^N P_A.$
\end{proof}

Proposition \ref{propn1} and theorem \ref{thm1} provide the building blocks for the covariant derivatives we will construct in the remaining sections.

With any linear connection $\nabla$  on a manifold $\M$ there is an associated shape map and torsion (see \cite{JP01}).
The shape map $A_Z: \X(\M) \rightarrow \X(\M)$, as given in Jerie and Prince~\cite{JP01}, is the (1,1) tensor
$$
A_Z(\xi) := \frac{d}{dt}\Bigl|_{t=0} \tau_t^{-1}(\zeta_{t*}\xi), \qquad \text{where }\xi \in T_x\M,
$$
where $\tau_t: T_x\M \rightarrow T_{\zeta_t(x)}\M$ is the parallel transport map, defined along the flow $\{\zeta_t\}$ of $Z$.\\
Useful representations of this shape map on $\M$ are
\begin{align}
A_X (Y) &= \nabla_X Y - [X,Y], \label{A-Gamma-defn}\\
A_X(Y) &= \nabla_YX +  T(X,Y) ,\label{Torsion-shape map}
\end{align}
where $X,Y \in \X(\M)$  (in this context see also Kobayashi and Nomizu, Volume 1, p235 \cite{KN63}). The torsion and curvature are defined respectively as usual by
\begin{align*}
&T(X,Y):= \nabla_XY-\nabla_YX-[X,Y],\\
&R(X,Y)Z:=\nabla_X\nabla_YZ-\nabla_Y\nabla_XZ-\nabla_{[X,Y]}Z.
\end{align*}

In the direct sum situation of theorem \ref{thm1} there are only two interesting cases.
\begin{cor}\label{cor1}

\begin{itemize}
\item[]
\item[]
If $X\in\Img(P_B), Y\in\Img(P_C), B\neq C,$ then
\[\nabla_XY=P_C([X,Y]),\quad T(X,Y)=-\sum_{D\neq B,C}P_D([X,Y]),\quad A_X(Y)=-\sum_{D\neq C}P_D([X,Y]) \]
\item[]
If $X\in\Img(P_B), Y\in\Img(P_B)$ then
\[\nabla_XY=\nabla^B_XY,\quad T(X,Y)=T^B(X,Y),\quad A_X(Y)=A^B_X(Y),\]
\end{itemize}
where $T,A$ and $T^B,A^B$ are the torsions and shape maps of $\nabla$ and $\nabla^B$ respectively.
\end{cor}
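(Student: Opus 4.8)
The plan is to reduce every assertion to the definition $\nabla_XY=\sum_{A=1}^N\bar\nabla^A_X(P_A(Y))$ of Theorem~\ref{thm1}, the explicit formula for $\bar\nabla^A$ from Proposition~\ref{propn1}, and the defining formulae for torsion and shape map (together with \eqref{A-Gamma-defn}). The one structural fact used repeatedly is that the $P_A$ are complementary projectors: $P_AP_C=0$ for $A\neq C$, $P_A^2=P_A$, and $\sum_AP_A=I_{T\M}$. In particular $P_C(X)=0$ whenever $X\in\Img(P_B)$ with $B\neq C$, while $P_B(X)=X$; and $[X,Y]=\sum_DP_D([X,Y])$ for all $X,Y$. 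Note also that $P_A(Y)\in\Img(P_A)$ always, so $\bar\nabla^A$ is only ever fed an admissible second argument.

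First I would treat the mixed case $X\in\Img(P_B)$, $Y\in\Img(P_C)$, $B\neq C$. Since $P_A(Y)=0$ for $A\neq C$ and $P_C(Y)=Y$, only the $A=C$ summand survives, so $\nabla_XY=\bar\nabla^C_XY$. Expanding by Proposition~\ref{propn1}, $\bar\nabla^C_XY=\nabla^C_{P_C(X)}Y+P_C([X-P_C(X),Y])$; here $P_C(X)=0$, so the first term drops and $X-P_C(X)=X$, giving $\nabla_XY=P_C([X,Y])$. Interchanging the roles of $X$ and $Y$ (hence of $B$ and $C$) yields $\nabla_YX=P_B([Y,X])=-P_B([X,Y])$. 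Substituting into $T(X,Y)=\nabla_XY-\nabla_YX-[X,Y]$ and into $A_X(Y)=\nabla_XY-[X,Y]$, and in each case writing $[X,Y]=\sum_DP_D([X,Y])$, the $P_B$- and $P_C$-components cancel, leaving $T(X,Y)=-\sum_{D\neq B,C}P_D([X,Y])$ and $A_X(Y)=-\sum_{D\neq C}P_D([X,Y])$. (As a consistency check one can verify this against \eqref{Torsion-shape map}: $\nabla_YX+T(X,Y)=-P_B([X,Y])-\sum_{D\neq B,C}P_D([X,Y])=-\sum_{D\neq C}P_D([X,Y])$.)

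For the matched case $X,Y\in\Img(P_B)$, the same observation leaves only the $A=B$ summand, so $\nabla_XY=\bar\nabla^B_XY$; but now $P_B(X)=X$, so $X-P_B(X)=0$ and Proposition~\ref{propn1} collapses to $\bar\nabla^B_XY=\nabla^B_XY$. Since $\nabla$ and $\nabla^B$ then agree on all arguments lying in $\Img(P_B)$, comparing the defining formulae for the torsion and shape map of $\nabla$ with those of $\nabla^B$ gives $T(X,Y)=T^B(X,Y)$ and $A_X(Y)=A^B_X(Y)$ immediately. There is no genuine obstacle: the entire argument is bookkeeping with the projectors, the only point needing a moment's attention being the case split $P_C(X)=0$ versus $P_B(X)=X$ that makes one of the two terms in the definition of $\bar\nabla$ vanish in each case.
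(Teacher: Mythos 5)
Your argument is correct and is precisely the routine projector bookkeeping the paper intends: the paper in fact states Corollary~\ref{cor1} without a written proof, treating it as an immediate consequence of Proposition~\ref{propn1}, Theorem~\ref{thm1} and the defining formulae for $T$ and $A_X$, which is exactly what you have verified (including the correct collapse of $\bar\nabla^C$ when $P_C(X)=0$ and of $\bar\nabla^B$ when $P_B(X)=X$). No gaps; nothing further is needed.
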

(The results for the curvature are straightforward but involve more cases.)

\section{Unconstrained SODEs}
We now briefly describe the basics of our geometrical calculus, for more details we refer to the book chapter \cite{KP08}. In this section we are dealing with a system of smooth second-order ordinary differential equations
\begin{align}\label{SODE1}
\ddot x^i = F^i (t, x^j, \dot x^j), \ \ i, j = 1, \dots, n,
\end{align}
on a manifold $M$ (not to be confused with $\M$ in the last section) with generic local coordinates $(x^i)$ and with
associated bundles $\pi:\R \times M \rightarrow M,$ $t:\R \times M \rightarrow
\R$ and $\pi^0_1:E \rightarrow \R \times M$. The {\em evolution space} $E:=\R \times TM$ has adapted coordinates $(t,x^i,\dot x^i)$ or $(t,x^i,u^i)$. It is useful to identify $E$ with $J^1(\R,M)$, the bundle of 1-jets of maps $\R\to M$. $E$ has two natural structures: its {\em contact system} and its {\em vertical sub-bundle}.

The contact (co-)distribution on $J^1(\R,M)$ is %\question{`contact distribution' and not `co-distribution' is almost universally used}
$$
\Omega^1(\R,M)=\Sp\{\theta^i:=dx^i-u^idt\}.
$$
The integral submanifolds of $\Omega^1(\R,M)$ contain the lifts of graphs (1-graphs) of functions $f:\R\to M$.

The vertical sub-bundle $V(E)$ consists of the vertical subspaces of the tangent spaces of $E$, at each point $p\in E$ this is the kernel of ${(\pi^1_0)}_{\ast}:T_pE\to T_{\pi^0_1(p)}(\R\times M).$ The coordinate basis for $V(E)$ is $\{V_i:=\frac{\partial}{\partial u^i}\}$ and this generates the {\em vertical distribution} on $E$.

Finally, these two structures are combined in the {\em vertical endomorphism}
$$ S:=\theta^i\otimes V_i.$$
An intrinsic definition of $S$ can be found in \cite{CPT84}.

Now we identify equations \eqref{SODE1} with the codimension $n$  embedded submanifold of $J^2(\R,M):$
$$
\mathcal F:=\{q\in J^2(\R,M):v^i(q)-F^i(\pi^1_2(q))=0\},
%\mathcal F:=\{(t,x^i,u^i,v^i)\in J^2(\R,M):v^i-F^i(t, x^j, u^j)=0\}.
$$
the coordinates on $J^2(\R,M)$ being $(t,x^i,u^i,v^i)$.
%(We use the shorthand of identifying  a point with its coordinates.)
 The contact distribution on $J^2(\R,M)$ is
$$
\Omega^2(\R,M)=\Sp\{dx^i-u^idt,du^j-v^jdt\}
$$
and, if $i_{\mathcal F}$ is the inclusion map $\mathcal F \hookrightarrow J^2(\R,M)$, then the contact distribution restricted to $\mathcal F$ is
$$
i_{\mathcal F}^\ast \Omega^2(\R,M)=\Sp\{\theta^i:=dx^i-u^idt, \phi^j:=du^j-F^jdt\}
$$
with annihilator generated by the  {\em second-order differential equation field} (\SODE),
\begin{equation}\label{SODE2}
\Gamma := \frac{\partial}{\partial t} + u^i
\frac{\partial}{\partial x^i} + F^j \frac{\partial}{\partial u^j}.
\end{equation}
Because $i_{\mathcal F}$ can be viewed as a section of $\pi^1_2: J^2(\R,M) \to J^1(\R,M)$ we can define both the restricted contact distribution and $\Gamma$ on $E.$ The geometric interpretation being that the integral curves of $\Gamma$ are the lifted graphs of solution curves to \eqref{SODE1}, as evidenced by the conditions $\theta^i(\Gamma)=0=\phi^i(\Gamma)$.

 It is shown in \cite{CPT84} that the first order deformation $\mathcal{ L}_\Gamma S$ has eigenvalues $0, 1$ and $-1$. The eigenspaces at each point of $E$ corresponding to eigenvalues $0, 1$ and $-1$ are $\Sp\{\Gamma\}$, the {\em vertical distribution} $V(E):=\Sp\{V_i\}$ of the tangent space and the {\em horizontal distribution} $H(E):=\Sp\{H_i\}$ respectively, where
 $$
H_i := \frac{\partial}{\partial x^i} -\Gamma^j_i\frac{\partial}{\partial u^j},\ \Gamma_j^i := -{\frac{1}{2}}\frac{\partial F^i}{\partial u^j}.
$$
The resulting direct sum decomposition of the tangent spaces of $E$ gives an adapted local basis $\{\Gamma, V_i, H_i\}$ with dual basis $\{dt,\psi^i,\theta^i\}$ where
 $$
 \psi^i:=\phi^i+\Gamma^i_j\theta^j.
 $$
 The corresponding projectors  are denoted ${P_\Gamma, P_V} $ and $P_H$.

 In this way the \SODE \  produces a {\em nonlinear connection} on $E,$  with components $\Gamma_j^i$ appearing as follows:

\[
[\Gamma, H_i]=\Gamma^j_iH_j + \Phi^j_iV_j, \quad [\Gamma,
V_i]=-H_i + \Gamma^j_iV_j, \quad [V_i,V_j]=0,
\]
\[
[H_i, V_j]=-\frac{1}{2}\left(\frac{\partial^2 F^k}{\partial u^i\partial
u^j}\right)V_k=V_j(\Gamma^k_i)V_k=V_i(\Gamma^k_j)V_k=[H_j, V_i],
\]
and
\[
[H_i,H_j]=R^k_{ij} V_k,
\]
where
\[
\Gamma_j^i := -{\frac{1}{2}}\frac{\partial F^i}{\partial u^j}, \quad \Phi_j^i := -\frac{\partial F^i}{\partial x^j} -
\Gamma_j^k\Gamma_k^i - \Gamma(\Gamma_j^i).
\]
The {\em Jacobi endomorphism} is  $\Phi=P_V\circ\lie{\Gamma}P_H =\Phi^i_j\theta^j\otimes V_i,$
and the curvature of the connection is defined by
$$
R^k_{ij}:=\frac{1}{2}\left (\frac{\partial^2 F^k}{\partial
x^i\partial u^j} - \frac{\partial^2 F^k}{\partial x^j\partial u^i}
+\frac{1}{2}\left (\frac{\partial F^l}{\partial
u^i}\frac{\partial^2 F^k}{\partial u^l\partial u^j} -
\frac{\partial F^l}{\partial u^j}\frac{\partial^2 F^k}{\partial
u^l\partial u^i} \right )\right ).
$$
In our chosen basis the curvature tensor is
$$R=R^k_{ij}\theta^i\wedge \theta^j\otimes V_k.$$
For completeness' sake we give the important relation between $\Phi$ and $R$:
$$
V_i(\Phi^k_j)-V_j(\Phi^k_i)=3R^k_{ij}.
$$

Clearly $V(E)$  is integrable (as is $\Sp\{\Gamma\}$) while $H(E)$ is generally not.

\subsection{The Massa-Pagani connection}
Massa and Pagani \cite{MaPa94} introduced a linear connection $\hnabla$ on $E$ induced by $\Gamma$ by imposing some natural requirements. These include that the covariant differentials $\hat{\nabla}dt, \hat{\nabla}S$ and $\hat{\nabla}\Gamma$ are all zero, that the vertical sub-bundle is flat and conditions on some components of the torsion, $T$, and the curvature $R$. However, no explicit intrinsic formula for $\hnabla$ is given. In ~\cite{JP01} the connection is derived in a different manner and, while an intrinsic formula for $\hnabla_XY$ is used, it involves a subsidiary connection on a certain pullback bundle. We now give an intrinsic form of the Massa-Pagani connection using proposition \ref{propn1} and theorem \ref{thm1}.
Firstly, we need the (1,1) tensor $Q$ on $E$, taking values in $H(E)$ with $Q\circ S=P_H$ and $S\circ Q=P_V$, so that
$$Q=\psi^i\otimes H_i.$$
In passing we note that, on vector fields,
\begin{equation}\label{LGQ}
\mathcal{L}_\Gamma Q=[\Phi,Q] \quad \text{and}\quad \mathcal{L}_\Gamma Q\circ S=\Phi,
\end{equation}
where $[\Phi,Q]$ is the commutator of $\Phi$ and $Q$.

Quite remarkably, each of the eigendistributions of $\mathcal{L}_\Gamma S$ has a natural covariant derivative (there may be others). These will allow us to construct the covariant derivative for the Massa-Pagani connection using theorem \ref{thm1}.

\begin{lem}
The distributions $\Img(P_\Gamma), \Img(P_H)$ and $\Img(P_V)$ have the following covariant derivatives over $\mathfrak{F}(E)$
\begin{equation*}
\nabla^\Gamma_XY:=X(dt(Y))\Gamma,\quad \nabla^H_XY:=Q([X, S(Y)]),\quad \nabla^V_XY:=S([X,Q(Y)]).
\end{equation*}
\end{lem}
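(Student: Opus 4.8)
The three operators share the same shape --- an $\mathfrak{F}(E)$-linear tensor applied to a bracket (or, in the $\Gamma$ case, to a directional derivative of a function) --- so the plan is to verify the covariant-derivative axioms \eqref{cov deriv props} for all three in parallel, reducing everything to four elementary facts about the adapted frame: (i) $S$ and $Q$ are $(1,1)$ tensors, hence $\mathfrak{F}(E)$-linear; (ii) $Q\circ S=P_H$ and $S\circ Q=P_V$, which restrict to the identity on $\Img(P_H)$ and $\Img(P_V)$ respectively; (iii) $Q$ annihilates $H(E)$ and $S$ annihilates $V(E)$, since $Q=\psi^i\otimes H_i$, $S=\theta^i\otimes V_i$ and $\{dt,\psi^i,\theta^i\}$ is the basis dual to $\{\Gamma,V_i,H_i\}$; and (iv) on $\Img(P_\Gamma)=\Sp\{\Gamma\}$ one has $Y=dt(Y)\,\Gamma$, so $dt(Y)\,\Gamma=Y$ there.

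$\R$-bilinearity is immediate in each case. For $\mathfrak{F}(E)$-homogeneity in the first argument: in the $\Gamma$ case $\nabla^\Gamma_{fX}Y=(fX)(dt(Y))\,\Gamma=f\,\nabla^\Gamma_XY$ by the definition of $fX$ acting on functions. In the $H$ and $V$ cases, expanding $[fX,S(Y)]=f[X,S(Y)]-S(Y)(f)\,X$ (respectively with $Q(Y)$ in place of $S(Y)$) and applying the tensor $Q$ (respectively $S$) produces the spurious term $-S(Y)(f)\,Q(X)$ (respectively $-Q(Y)(f)\,S(X)$); but $X\in\Img(P_H)$ forces $Q(X)=0$ by (iii), and $X\in\Img(P_V)$ forces $S(X)=0$, so homogeneity follows.

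For the Leibniz rule in the second argument: in the $\Gamma$ case $\nabla^\Gamma_X(fY)=X(f\,dt(Y))\,\Gamma=X(f)\,dt(Y)\,\Gamma+f\,\nabla^\Gamma_XY=X(f)Y+f\,\nabla^\Gamma_XY$, using (iv). In the $H$ case, $S$ is $\mathfrak{F}(E)$-linear, so $[X,S(fY)]=[X,fS(Y)]=f[X,S(Y)]+X(f)S(Y)$, whence $\nabla^H_X(fY)=f\,Q([X,S(Y)])+X(f)\,Q(S(Y))=f\,\nabla^H_XY+X(f)Y$ by (ii), since $Y\in\Img(P_H)$. The $V$ case is identical after exchanging $S\leftrightarrow Q$ and using $S(Q(Y))=P_V(Y)=Y$ for $Y\in\Img(P_V)$. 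This disposes of all three verifications.

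There is no real obstacle here; the only care needed is to isolate facts (i)--(iv) at the outset, after which each axiom check is a one-line bracket expansion. One might also note in passing --- consistent with the remark following Proposition~\ref{propn1} --- that $\nabla^H$ and $\nabla^V$ need not take values in their source distributions, whereas $\nabla^\Gamma$ manifestly does.
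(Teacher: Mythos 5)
Your verification is correct and follows essentially the same route as the paper: the same bracket expansions, with $Q(X)=0$ for horizontal $X$ killing the spurious term in the first-argument homogeneity and $Q(S(Y))=Y$ (resp.\ $S(Q(Y))=Y$) giving the Leibniz rule, the $\Gamma$ case being the one the paper dismisses as self-evident. One correction to your closing aside, though: $\nabla^H$ and $\nabla^V$ \emph{do} take values in their source distributions, since $Q=\psi^i\otimes H_i$ is $H(E)$-valued and $S=\theta^i\otimes V_i$ is $V(E)$-valued; this is precisely what the paper records immediately after the lemma so that corollary~\ref{cor2.3} applies, and it is needed later to conclude $\hnabla P_H=0$ and $\hnabla P_V=0$.
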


\begin{proof}
That $\nabla^\Gamma$ satisfies properties \eqref{cov deriv props} is self-evident. Now, for $X,Y \in H(E),$
\[
\nabla^H_{fX}Y=Q([fX,S(Y)])=fQ([X,S(Y)])-S(Y)(f)Q(X)=f\nabla^H_XY\ \ \text{since}\ \ Q(X)=0,
\]
and
\begin{align*}
\nabla^H_X(fY)&=Q([X,S(fY)])=X(f)Q(S(Y))+fQ([X,S(Y)])\\
&=X(f)Y+f\nabla^H_XY\ \ \text{since}\ \ Q(S(Y))=Y.
\end{align*}
The result for $\nabla^V$ follows similarly.
\end{proof}

Note that each of $\nabla^\Gamma,\nabla^H,\nabla^V$ maps into their respective distributions so that corollary \ref{cor2.3} applies.\newline
Denoting by $\bnabla^\Gamma, \bnabla^H$ and $\bnabla^V$ the extensions of $\nabla^\Gamma, \nabla^H$ and $\nabla^V$ given in proposition \ref{propn1}, we have

\begin{thm}\label{M-P conn1}
The Massa-Pagani connection is, for $X,Y\in\mathfrak{X}(E),$ %\question{\textcolor{red}{reformat ugly Lie derivatives}}
\begin{equation}\label{MP formula1}
\hnabla_XY=\bnabla^\Gamma_X(P_{\Gamma}(Y))+\bnabla^H_X(P_H(Y))+\bnabla^V_X(P_V(Y)).
\end{equation}
Explicitly,
\begin{align}\label{MP formula2}
%\notag\hnabla_XY= &\ P_\Gamma(X)(dt(Y))\Gamma+Q([P_H(X),S(Y)])+S([P_V(X),Q(Y)]) \\
%\notag&+P_\Gamma([X-P_\Gamma(X),P_\Gamma(Y)])+P_H([X-P_H(X),P_H(Y)])+P_V([X-P_V(X),P_V(Y)])\\ %\lie{X-\PH(X)}Q(S(Y)))+P_V(\lie{X-\PV(X)}S(Q(Y))).
\notag\hnabla_XY= &\ X(dt(Y))\Gamma +Q([P_H(X),S(Y)])+S([P_V(X),Q(Y)])\\
&+P_H([X-P_H(X),P_H(Y)])+P_V([X-P_V(X),P_V(Y)]).
\end{align}
\end{thm}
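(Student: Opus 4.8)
The plan is to verify that the right-hand side of \eqref{MP formula1} satisfies the complete list of axioms that Massa and Pagani used to characterise $\hnabla$, since those conditions pin the connection down uniquely. First I would observe that formula \eqref{MP formula1} is indeed a covariant derivative: this is an immediate application of Theorem \ref{thm1} to the direct sum decomposition $TE=\Img(P_\Gamma)\oplus\Img(P_H)\oplus\Img(P_V)$ with the three submodule derivatives $\nabla^\Gamma,\nabla^H,\nabla^V$ from the preceding lemma and their extensions $\bnabla^\Gamma,\bnabla^H,\bnabla^V$ from Proposition \ref{propn1}. The expansion \eqref{MP formula2} is then just the result of substituting the explicit formulae for $\bnabla^A$ (Proposition \ref{propn1}) and using $P_\Gamma(X)(dt(Y))=X(dt(Y))$ together with $P_\Gamma([X-P_\Gamma(X),P_\Gamma(Y)])=0$, which holds because $dt([\,\cdot\,,Z])=0$ whenever $dt(Z)$ is constant along the relevant directions — here $dt(P_\Gamma(Y))$ need not be constant, so more care is needed: I would instead note $\bnabla^\Gamma_X(P_\Gamma Y)=\nabla^\Gamma_{P_\Gamma X}(P_\Gamma Y)+P_\Gamma([X-P_\Gamma X,P_\Gamma Y])$ and that $P_\Gamma([X-P_\Gamma X,\, dt(Y)\Gamma])=(X-P_\Gamma X)(dt(Y))\,\Gamma$, so the two $\Gamma$-terms combine to $X(dt(Y))\Gamma$. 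That disposes of the ``Explicitly'' clause.

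Next I would check the Massa--Pagani defining conditions one at a time. \emph{Flatness of the vertical sub-bundle and $\nabla P_\Gamma=\nabla P_H=\nabla P_V=0$}: since each of $\nabla^\Gamma,\nabla^H,\nabla^V$ maps into its own distribution (noted after the lemma), Corollary \ref{cor2.3} gives $\nabla P_A=0$ for $A\in\{\Gamma,H,V\}$ directly; flatness of $V(E)$ follows because on $\Img(P_V)$ the derivative is $\nabla^V_XY=S([X,Q(Y)])$ and a short computation with $[V_i,V_j]=0$ shows $\hnabla_{V_i}V_j=0$, hence $R(V_i,V_j)=0$. \emph{$\hnabla\,dt=0$}: this is equivalent to $X(dt(\hnabla_X Y))=X(dt(Y))\cdot(\dots)$; more cleanly, $(\hnabla_X\,dt)(Y)=X(dt(Y))-dt(\hnabla_XY)$, and from \eqref{MP formula2} only the first term contributes to $dt(\hnabla_XY)$, giving $dt(\hnabla_XY)=X(dt(Y))$, so $\hnabla\,dt=0$. \emph{$\hnabla\Gamma=0$}: compute $\hnabla_X\Gamma$ from \eqref{MP formula2} with $Y=\Gamma$; since $P_H(\Gamma)=P_V(\Gamma)=0$ and $dt(\Gamma)=1$, every term vanishes. \emph{$\hnabla S=0$}: using $S=\theta^i\otimes V_i=P_V\circ\,(\text{shift})$, or more invariantly the identities $S\circ Q=P_V$, $Q\circ S=P_H$, one checks $(\hnabla_X S)(Y)=\hnabla_X(S(Y))-S(\hnabla_X Y)$ vanishes by splitting $Y$ into its three components and using the explicit forms of $\nabla^H,\nabla^V$; for $Y\in\Img(P_H)$ this reduces to $S(Q([X,S(Y)]))=P_V([X,S(Y)])$ modulo the extension terms, which is exactly the content built into $\nabla^H,\nabla^V$ being ``$Q$'' and ``$S$'' of a bracket. \emph{Torsion conditions}: from Corollary \ref{cor1} the mixed and pure torsion components are expressed through $P_D([X,Y])$ and the $T^B$; I would check these match Massa--Pagani's prescribed torsion components (the nonzero ones being governed by $\Phi$ and $R$) by evaluating on the adapted basis $\{\Gamma,V_i,H_i\}$ using the structure equations listed in the text.

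The main obstacle, I expect, is the verification of $\hnabla S=0$ together with the precise matching of the torsion and curvature components against Massa and Pagani's original list — not because any single computation is deep, but because it requires carefully tracking the extension terms $P_H([X-P_H X,P_H Y])$ and $P_V([X-P_V X,P_V Y])$ on all the mixed-type pairs from the adapted basis, and reconciling sign conventions with \cite{MaPa94}. Once $\hnabla$ is shown to satisfy the full characterising list, uniqueness (established by Massa--Pagani, and re-derived in \cite{JP01}) forces the right-hand side of \eqref{MP formula1} to \emph{be} the Massa--Pagani connection, completing the proof.
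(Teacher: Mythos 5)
Your proposal is correct in outline but follows a genuinely different route from the paper's own proof of Theorem \ref{M-P conn1}. You share the first two steps with the paper: Theorem \ref{thm1} makes \eqref{MP formula1} a covariant derivative, and your careful treatment of the $\Gamma$-term (using $dt(P_\Gamma Y)=dt(Y)$ and $P_\Gamma([X-P_\Gamma X,dt(Y)\Gamma])=(X-P_\Gamma X)(dt(Y))\Gamma$, which relies on $dt([Z,\Gamma])=0$ for $Z$ with $dt(Z)=0$) is exactly the ``little manipulation'' the paper alludes to in passing from \eqref{MP formula1} to \eqref{MP formula2}. After that the paths diverge: the paper simply evaluates the formula on the adapted basis $\{\Gamma,V_i,H_i\}$, obtains the four families of non-zero coefficients \eqref{covariant-deriv-defn-basis}, and identifies the connection by matching these with the known Massa--Pagani coefficients in \cite{JP01,MaPa94} --- a short, purely computational identification. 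You instead propose to verify the full list of Massa--Pagani axioms ($\hnabla\Gamma=0$, $\hnabla dt=0$, $\hnabla S=0$, the torsion/curvature and vertical-flatness conditions) and then invoke their uniqueness theorem; this is legitimate and is essentially what the paper does separately in Proposition \ref{M-P conn2}, where it also re-proves uniqueness directly rather than citing it. The component-matching proof is quicker and avoids the delicate bookkeeping you correctly identify as the main obstacle (the extension terms in $\hnabla S=0$ and the torsion components, plus sign conventions against \cite{MaPa94}); your axiomatic route buys an independent, intrinsic characterisation that does not lean on the coefficient tables of the literature, at the cost of leaving the heaviest verifications ($\hnabla S=0$ and the torsion list) only sketched. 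One small slip: $\hnabla_{V_i}V_j=S([V_i,Q(V_j)])=S([V_i,H_j])$ vanishes because $[V_i,H_j]$ is vertical and $S$ annihilates vertical fields, not because $[V_i,V_j]=0$; and note that Massa--Pagani's vertical-flatness requirement corresponds to the conditions $R(\Gamma,V)=0$ and $\hnabla_V U=0$ in Proposition \ref{M-P conn2} rather than merely $R(V_i,V_j)=0$. Neither point affects the viability of your argument.
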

Note that $\hnabla$ preserves our direct sum decomposition of $\mathfrak{X}(E).$
\begin{proof}
%It is straightforward to show that
%$\hnabla_X(fY)=f\hnabla_XY +X(f)Y.$  To establish $\hnabla_{fX}Y=f\hnabla_XY$ is a little more tedious and requires the identities $\lie{fX}Y=-Y(f)X+f\lie{X}Y$ and $(\lie{fX}M)(Y)=-M(Y)(f)X+Y(f)M(X)+f(\lie{X}M)(Y).$\newline
Theorem \ref{thm1} shows that \eqref{MP formula1} is indeed a covariant derivative and a little manipulation of the first term in \eqref{MP formula1} is required to produce the first term in \eqref{MP formula2}. The non-zero basis components are:
\begin{align}
\notag &\hnabla_\Gamma H_i = \Gamma^j_i H_j,     &  &\hnabla_\Gamma V_i  = \Gamma^j_i V_j,\\
\label{covariant-deriv-defn-basis} &\hnabla_{H_i}H_j = \frac{\partial \Gamma^k_i}{\partial u^j} H_k,  &  &\hnabla_{H_i}V_j = \frac{\partial \Gamma^k_i}{\partial u^j} V_k,
\end{align}
in agreement with those of the Massa-Pagani connection given in \cite{JP01,MaPa94}.
\end{proof}
The geometric properties of $\hnabla$ are captured in the following proposition whose conditions are largely those of Massa and Pagani.
%This is a corollary of the main result of this paper, theorem \ref{Main result}, where the proof is to be found.

\begin{propn}\label{M-P conn2}
The Massa-Pagani connection \eqref{MP formula1} is the unique linear connection on $E$  satisfying

\begin{align}\label{MP cdtns}
\notag &\hnabla \Gamma=0, &&\hnabla \d t=0,\\
\notag &\hnabla S=0,&&\hnabla Q=0,\\
&P_H(X)=\hat T(\Gamma,S(X)),&&P_V(X)=S(\hat T(\Gamma,X)),\\
\notag &R(\Gamma,V)=0,&&\hnabla_VU=0.
\end{align}

Here $V$ is vertical and $U$ is vertical and constant along the (vertical) fibres, that is, a vertical lift from $\R\times M.$
\end{propn}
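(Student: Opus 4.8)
The plan is to prove uniqueness and existence separately, with existence handled by verifying that the connection $\hnabla$ of Theorem~\ref{M-P conn1} satisfies all eight conditions in \eqref{MP cdtns}, and uniqueness handled by showing that these conditions force every basis covariant derivative $\hnabla_X Y$ with $X,Y\in\{\Gamma,H_i,V_i\}$ to take the values listed in \eqref{covariant-deriv-defn-basis} (together with the vanishing ones). Since a linear connection is determined by its action on a local basis, this suffices.

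For the \emph{existence} half I would work through the conditions in the natural groups. The conditions $\hnabla\Gamma=0$ and $\hnabla\,\d t=0$ follow from the explicit basis formulae \eqref{covariant-deriv-defn-basis}: $\hnabla_X\Gamma=0$ for all basis $X$ because the $\Gamma$-component derivative $\bnabla^\Gamma_X(P_\Gamma(Y))$ contributes $X(dt(Y))\Gamma$, which vanishes when $Y=\Gamma$ only after checking $X(dt(\Gamma))=X(1)=0$; and $\hnabla\,\d t=0$ is dual to $\hnabla\Gamma=0$ once one notes $dt$ annihilates $H(E)\oplus V(E)$. The conditions $\hnabla S=0$ and $\hnabla Q=0$ are exactly the statement that $\hnabla$ intertwines $H(E)$ and $V(E)$ via $S$ and $Q$; concretely $(\hnabla_X S)(Y)=\hnabla_X(S(Y))-S(\hnabla_XY)$, and using $S=\theta^i\otimes V_i$, $Q=\psi^i\otimes H_i$ together with the matching basis components $\hnabla_{H_i}H_j=\frac{\partial\Gamma^k_i}{\partial u^j}H_k$ and $\hnabla_{H_i}V_j=\frac{\partial\Gamma^k_i}{\partial u^j}V_k$ (and similarly along $\Gamma$) one gets termwise cancellation. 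The torsion conditions $P_H(X)=\hat T(\Gamma,S(X))$ and $P_V(X)=S(\hat T(\Gamma,X))$ are checked using $\hat T(\Gamma,Y)=\hnabla_\Gamma Y-\hnabla_Y\Gamma-[\Gamma,Y]=\hnabla_\Gamma Y-[\Gamma,Y]$ (since $\hnabla\Gamma=0$) and the bracket relations $[\Gamma,H_i]=\Gamma^j_iH_j+\Phi^j_iV_j$, $[\Gamma,V_i]=-H_i+\Gamma^j_iV_j$: a direct substitution shows $\hat T(\Gamma,V_i)=H_i$ and $\hat T(\Gamma,H_i)=-\Phi^j_iV_j$, whence $S(\hat T(\Gamma,V_i))=0$, $S(\hat T(\Gamma,H_i))$ picks out $H_i$ after applying $S$, etc.; one must be careful which of $S$, $Q$ lands where, but it is bookkeeping. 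Finally $\hnabla_V U=0$ is immediate from Corollary~\ref{cor1} (second bullet, $B=C=V$) since $\nabla^V_{V_i}U=S([V_i,Q(U)])=S([V_i,0])=0$ as $U$ is vertical so $Q(U)=0$; and $R(\Gamma,V)=0$ follows from the basis curvature computation $R(\Gamma,V_i)V_j$, $R(\Gamma,V_i)H_j$ using \eqref{covariant-deriv-defn-basis} and $\lie{\Gamma}Q=[\Phi,Q]$, most cleanly by differentiating along $\Gamma$ the relation $\hnabla_{V_i}(\cdot)$ vanishes and using flatness of the vertical sub-bundle.

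For the \emph{uniqueness} half, let $\del$ be any linear connection satisfying \eqref{MP cdtns}; I would recover its basis components one block at a time. From $\del\Gamma=0$ we get $\del_X\Gamma=0$ for all $X$. The torsion condition $P_V(X)=S(T^\del(\Gamma,X))$ applied to $X=H_j$ gives $0=S(\del_\Gamma H_j-[\Gamma,H_j])=S(\del_\Gamma H_j)-S(\Gamma^i_jH_i+\Phi^i_jV_i)=S(\del_\Gamma H_j)$; since $\del S=0$ forces $\del_\Gamma H_j$ to be horizontal (apply $S$ is injective on $H(E)$... more precisely $\del S=0$ relates vertical and horizontal parts), this pins $\del_\Gamma H_j=\Gamma^i_jH_j$; then $\del S=0$ gives $\del_\Gamma V_j=\Gamma^i_jV_j$. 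Similarly $P_H(X)=T^\del(\Gamma,S(X))$ evaluated on vertical arguments recovers the same $\Gamma$-row consistently. For the $H_i$-row: $\del Q=0$ and $\del S=0$ together force $\del_{H_i}V_j$ and $\del_{H_i}H_j$ to have the same "matrix" (i.e., $\del_{H_i}V_j=S(\del_{H_i}H_j)$ in the appropriate sense), reducing the unknowns to $\del_{H_i}H_j$, and the remaining torsion/curvature conditions together with the symmetry $[H_i,V_j]=[H_j,V_i]$ (which gives $V_j(\Gamma^k_i)=V_i(\Gamma^k_j)$) pin $\del_{H_i}H_j=\frac{\partial\Gamma^k_i}{\partial u^j}H_k$. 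For the $V_i$-row: $\del_{V_i}U=0$ on vertical lifts $U$, extended by the Leibniz rule in the $C^\infty$-argument, determines $\del_{V_i}V_j$ (any vertical field is $g^k(t,x,u)$ times a vertical lift, and $\del_{V_i}(g^kU_k)=V_i(g^k)U_k+g^k\del_{V_i}U_k=V_i(g^k)U_k$, matching $\hnabla$); then $\del S=0$, $\del Q=0$ propagate this to $\del_{V_i}H_j=0$ and the remaining mixed components. Matching every block against \eqref{covariant-deriv-defn-basis} gives $\del=\hnabla$.

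The main obstacle I anticipate is the uniqueness argument for the $V_i$-row and the $H_i$-row: the conditions $\hnabla S=0$, $\hnabla Q=0$ are tensorial but their combined content ("$S$ and $Q$ are parallel, hence $\hnabla$ is block-diagonal and the $H$-block and $V$-block are conjugate") needs to be extracted carefully, and one must verify that the given list of conditions is exactly enough — in particular that $R(\Gamma,V)=0$ is genuinely needed (it should be what rigidifies the $V_i H_j$ and $\Gamma V_i$ interplay beyond what $\hnabla_V U=0$ alone gives) rather than redundant. A secondary nuisance is notational: the paper writes $\hnabla$ in the theorem but $\hat T$, $\del$ variously, so I would fix $\del$ for "arbitrary candidate connection" and reserve $\hnabla$ for the one of Theorem~\ref{M-P conn1}, and lean on Corollary~\ref{cor1} to shortcut the existence-side computations of $T$ and the mixed covariant derivatives wherever possible.
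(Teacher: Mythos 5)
Your overall strategy coincides with the paper's (verify that the explicit connection of Theorem \ref{M-P conn1} satisfies the eight conditions, then recover every basis component from the conditions alone), but the uniqueness half as written has a genuine gap at its decisive step: you never actually determine $\hnabla_{H_i}H_j$. The torsion conditions in \eqref{MP cdtns} only constrain $\hat T(\Gamma,\cdot)$, so they say nothing about the $H$-row, and the symmetry $[H_i,V_j]=[H_j,V_i]$ you invoke plays no role; your own expressed doubt about whether $R(\Gamma,V)=0$ is ``genuinely needed'' signals that this step is missing. In the paper it is precisely the curvature condition that does the work: with $\hnabla_{V_i}H_j=0$ and $\hnabla_\Gamma H_j=\Gamma^k_jH_k$ already fixed, expanding $0=R(\Gamma,V_i)H_j=\hnabla_\Gamma(\hnabla_{V_i}H_j)-\hnabla_{V_i}(\hnabla_\Gamma H_j)-\hnabla_{[\Gamma,V_i]}H_j$ and using $[\Gamma,V_i]=-H_i+\Gamma^l_iV_l$ yields $\hnabla_{H_i}H_j=V_i(\Gamma^k_j)H_k$, and then $\hnabla_{H_i}V_j=V_i(\Gamma^k_j)V_k$ via $\hnabla S=0$. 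Without this computation the list of conditions has not been shown to pin down the connection.

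There are also computational errors showing the $S$/$Q$ bookkeeping went astray. In your $\Gamma$-row derivation, $S([\Gamma,H_j])=S(\Gamma^i_jH_i+\Phi^i_jV_i)=\Gamma^i_jV_i\neq 0$, so your chain ``$0=S(\hnabla_\Gamma H_j)$'' is false and, combined with horizontality, would force $\hnabla_\Gamma H_j=0$ rather than $\Gamma^i_jH_i$; the corrected version gives $S(\hnabla_\Gamma H_j)=\Gamma^i_jV_i$, whence the right answer (the paper instead applies $\PH(X)=\hat T(\Gamma,S(X))$ with $X=H_i$). Similarly, in the existence half $Q(U)\neq 0$ for vertical $U$ (indeed $Q(V_i)=H_i$, since $Q=\psi^i\otimes H_i$); the correct reason $\hnabla_{V_i}U=S([V_i,Q(U)])=0$ for a vertical lift $U=U^jV_j$ is that the horizontal part of $[V_i,U^jH_j]$ is $V_i(U^j)H_j=0$ and $S$ annihilates the vertical remainder. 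Likewise $S(\hat T(\Gamma,V_i))=S(H_i)=V_i=\PV(V_i)$, not $0$, and $S(\hat T(\Gamma,H_i))=0=\PV(H_i)$ — your two torsion checks are stated backwards. The existence half can in any case be verified directly from the components \eqref{covariant-deriv-defn-basis}, so these slips are repairable, but the missing $R(\Gamma,V)=0$ argument must be supplied for the uniqueness claim to stand.
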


\begin{proof}
The first four properties and the last are straightforward using \eqref{MP formula1},\eqref{MP formula2} and corollary \ref{cor1}.
The fifth property can be established as follows: because of the linearity of $\hat T$ and $S$ we only need to consider $\hat T(\Gamma,S(X))$ for $X\in\Img(P_H).$
Then, for $X\in\Img(P_H),$
\begin{align*}
\hat T(\Gamma,S(X))=&-P_H([\Gamma,S(X))])\ \text{from corollary \ref{cor1}}\\
=&-P_H(\mathcal{L}_\Gamma S(X)+S([\Gamma,X]))\\
=&\ P_H(X)
\end{align*}
recalling that $\Img(P_H)$ is the eigenspace of $\mathcal{L}_\Gamma S$ with eigenvalue $-1.$\newline
Property six follows in a similar fashion. The $R$ property is easily established by direct computation with the components \eqref{covariant-deriv-defn-basis} and the basis bracket relations.\newline
Thus we have established that there exists a linear connection satisfying the conditions \eqref{MP cdtns}. This also verifies that these conditions are self-consistent.
We now prove uniqueness by showing that \eqref{MP cdtns} implies \eqref{covariant-deriv-defn-basis}.
%\end{proof}

Firstly note that $\hnabla_X$ commutes with both $\PH$ and $\PV$ because $\hnabla S=0=\hnabla Q$ and $\PH=Q\circ S$ and $\PV=S\circ Q.$ Along with $\hnabla P_\Gamma =0$  this means that $\hnabla$ preserves our direct sum decomposition of  $TE.$

Now consider $\hnabla_\Gamma X$. The property $\hnabla S=0$ gives
\[
\hnabla_\Gamma V_i=\hnabla_\Gamma(S(H_i))=S(\hnabla_\Gamma H_i).
\]

Now we apply $\PH(X)=T(\Gamma,S(X))$ with $X=H_i$:
\begin{align*}
H_i&=\PH(H_i)=T(\Gamma,S(H_i))=\hnabla_\Gamma V_i - [\Gamma, V_i]\\
&\implies \hnabla_\Gamma V_i=\Gamma^j_i V_j\quad \text{and hence}\quad \hnabla_\Gamma H_i=\Gamma^j_i H_j.
\end{align*}

Next we show that $\hnabla_{V_i}H_j$=0. %and $\hnabla_{V_i}\pd{}{x^\alpha}$ are zero.
\[
S(\hnabla_{V_i}H_j)=\hnabla_{V_i}(S(H_j))=\hnabla_{V_i}V_j=0
\]
but $\hnabla_{V_i}H_j$ is horizontal and so it is zero. %And
%\[
%0=(I-P_V)(T(V_i,\pd{}{x^\alpha}))=(I-P_V)(\hnabla_{V_i}\pd{}{x^\alpha}-\hnabla_{\pd{}{x^\alpha}}V_i)=\hnabla_{V_i}\pd{}{x^\alpha}.
%\]
Now consider $\hnabla_{H_i}X$. Again using $\hnabla S=0$
\[
S(\hnabla_{H_i}H_j)=\hnabla_{H_i}(S(H_j))=\hnabla_{H_i}V_j.
\]

 The curvature condition $R(\Gamma,V_i)=0$ gives the values of these components as follows.
 \begin{align*}
 0=R(\Gamma,V_i)H_j&=\hnabla_\Gamma(\hnabla_{V_i}H_j)-\hnabla_{V_i}(\hnabla_\Gamma H_j)-\hnabla_{[\Gamma,V_i]}H_j\\
 &=-\hnabla_{V_i}(\Gamma^k_jH_k)-\hnabla_{-H_i+\Gamma^l_iV_l}H_j\\
 &=-\hnabla_{V_i}(\Gamma^k_jH_k)-\hnabla_{-H_i}H_j\\
 \implies\hnabla_{H_i}H_j&=V_i(\Gamma^k_j)H_k \quad\text{and so}\quad \hnabla_{H_i}V_j=V_i(\Gamma^k_j)V_k,
 \end{align*}
 which completes the proof.
\end{proof}

Note: The condition $P_H(X)=T(\Gamma,S(X))$ (from \cite{MaPa94}) is redundant in \eqref{MP cdtns}.

Recasting \eqref{MP formula2} we obtain the shape map \eqref{A-Gamma-defn} for the Massa-Pagani connection in an explicitly linear form:%\question{\textcolor{red}{Redo}}
\begin{align}\label{A_X formula}
A_X(Y)&=Y(X(t))\Gamma+(\PH\circ\lie{X}\PH)(Y)+(\PV\circ\lie{X}\PV)(Y)\\
              &-(\PH\circ\lie{\PH(X)}Q\circ S)(Y)-(\PV\circ\lie{\PV(X)}S\circ Q)(Y).
\end{align}
Most importantly in what follows, from corollary \ref{cor1} or from \eqref{A_X formula} by noting that $\lie{\Gamma}\PH=-\lie{\Gamma}\PV$,
\begin{equation}
A_\Gamma(X)=-\PH([\Gamma,\PV(X)])-\PV([\Gamma,\PH(X)]),
\end{equation}
that is,
\begin{equation}
A_\Gamma=-\PH\circ\lie{\Gamma}\PV - \PV\circ\lie{\Gamma}\PH
\end{equation}
and so, from \eqref{LGQ},
\begin{equation}\label{Shape Map(1)}
A_\Gamma = -\mathcal{L}_\Gamma Q\circ S+Q=-\Phi + Q= -\Phi^i_j\theta^j\otimes V_i+\psi^i\otimes H_i.
\end{equation}
The normal forms  of the component matrix, $\mathbf \Phi=(\Phi^i_j)$, of $\Phi$ are fundamental to the analysis of the inverse problem in the calculus of variations. While the (1,1) tensor  $\Phi$ itself clearly has no real eigenspaces, $A_\Gamma$ captures the real eigenspaces of  $\Phi^i_j$:
%$$A_\Gamma = -\Phi - P_H \circ\mathcal{L}_\Gamma P_V= -\Phi^i_jV_i\otimes\theta^j+H_i\otimes\psi^i$$
%and
$$A_\Gamma(X)=\mu X \iff \mu^2\theta^i(X)=-\Phi^i_j\theta^j(X) \ \text{and}\ \psi^i(X)=\mu\theta^i(X).$$

The torsion, $\hat T$, of the Massa and Pagani connection contains all the important properties of the \SODE\ $\Gamma$:
\begin{align*}
&\hat T(\Gamma,V_i)=H_i,\ \hat T(\Gamma,H_i)=-\Phi^j_iV_j,\ \hat T(V_i,V_j)=0,\\ &\hat T(V_i,H_j)=0,\ \hat T(H_i,H_j)=-R^k_{ij}V_k.
\end{align*}
The vector field brackets are now more elegantly expressed as
\begin{subequations}\label{CD-VF-commutators}
\begin{align}
& [\Gamma, X^V] = \hnabla_\Gamma X^V - A_\Gamma(X^V),  \label{CD-VF-commutators-1}\\
& [\Gamma, X^H] =\hnabla_\Gamma X^H - A_\Gamma (X^H),   \label{CD-VF-commutators-2}\\
&  [X^V, Y^V] = \hnabla_{X^V} Y^V - \hnabla_{Y^V} X^V,    \label{CD-VF-commutators-3}\\
& [X^V, Y^H] = \hnabla_{X^V} Y^H - \hnabla_{Y^H} X^V,    \label{CD-VF-commutators-4}\\
& [X^H, Y^H] = \hnabla_{X^H} Y^H - \hnabla_{Y^H} X^H + R(X^H,Y^H),  \label{CD-VF-commutators-5}
\end{align}
\end{subequations}
where the superscripts $V$ and $H$ indicate elements of $V(E)$ and $H(E)$ respectively (and not vertical and horizontal lifts). Or we could have replaced $X^H$ and $X^V$ by $\PH(X)$ and $\PV(X)$.

\section{Constrained SODEs}

Now we turn to systems of second order equations with first order constraints on some of the dependent variables. Specifically, we consider
\begin{align}
\ddot x^a & = F^a (t, x^b,x^\beta, \dot x^b), \label{CSODE1}\\
\dot x^\alpha & =\Psi^\alpha(t,x^b,x^\beta,\dot x^b) \label{CSODE2}
\end{align}
where $a, b = 1, \dots, m$ and $\alpha,\beta = m+1,\dots,n.$ The $(x^\alpha)$

 can be thought of as coordinates on a constraint submanifold, as in the case of motion on a sphere, or as additional control variables. We show how these constraint equations arise in non-holonomic mechanics in subsection \ref{non-hol} followed by examples in section \ref{xmpls}.

The geometric setting is the (extended) configuration space $\R\times M$ with coordinates $(t,x^a,x^\alpha)$ and evolution space, $E$,  with adapted coordinates $(t,x^a,x^\alpha,u^a,u^\alpha)$. We will use the combined index $i=(a,\alpha)$ for brevity and compatibility with the unconstrained case. In order to preserve the dependence of \eqref{CSODE1} and \eqref{CSODE2} on the constraint variables under coordinate transformations and hence the tensorial character of the geometric constructs,  we give $M$ a product structure: $M:=M^m\times M^{n-m}.$ As we will see, $M^{n-m}$ also requires a linear connection. Wherever possible we use the notation of Sarlet {\em et al} \cite{SCS95,SCS97,SSC96}.

Again, we identify $E$ with $J^1(\R,M)$ and the contact system on $J^2(\R,M)$ is
$$
\Omega^2(\R,M)=\{dx^i-u^idt,du^i-v^idt\}.
$$
Now we identify equations \eqref{CSODE1},\eqref{CSODE2} with the embedded submanifold $\tilde{\mathcal F}$ of $J^2(\R,M)$ of co-dimension~$n$
$$
\tilde{\mathcal F}:=\{q\in J^2(\R,M):v^a(q)-F^a(q)=0, u^\alpha(q)-\Psi^\alpha(q)=0\}.
%\mathcal F:=\{(t,x^i,u^i,v^i)\in J^2(\R,M):v^a-F^a(t, x^j, u^j)=0, u^\alpha-\Psi^\alpha(t,x^j,u^j)=0\}.
$$
The contact distribution restricted to $\tilde{\mathcal F}$ can be shown to be
$$
i_{\tilde{\mathcal F}}^\ast \Omega^2(\R,M)=\Sp\{\theta^a:=dx^a-u^adt, \theta^\alpha:=dx^\alpha-\Psi^\alpha dt,\phi^b:=du^b-F^bdt, \phi^\alpha:=d\Psi^\alpha-\dot\Psi^\alpha dt\}
$$
where  $i_{\tilde{\mathcal F}}$ is the inclusion map $\tilde{\mathcal F} \hookrightarrow J^2(\R,M)$ and
$$
\dot\Psi^\alpha:=\pd{\Psi^\alpha}{t}+u^a\pd{\Psi^\alpha}{x^a}
+\Psi^\beta\pd{\Psi^\alpha}{x^\beta}+F^b\pd{\Psi^\alpha}{u^b}
$$
is the total time derivative of $\Psi^\alpha.$
Now $\phi^\alpha\in \Sp\{\theta^j,\phi^b\}$ so the contact distribution restricted to $\tilde{\mathcal F}$  is generated by this submodule. The corresponding annihilated distribution is generated by the  differential equation field
\begin{equation}\label{CSODE}
\tGamma := \frac{\partial}{\partial t} + u^a
\frac{\partial}{\partial x^a} + \Psi^\beta\pd{}{x^\beta}+F^b \frac{\partial}{\partial u^b}.
\end{equation}
Unlike the unconstrained case  we cannot identify $i_{\tilde{\mathcal F}}$ as a section of $\pi^1_2:J^2(\R,M)\to J^1(\R,M).$ Instead it is a section of $\tilde\pi^1_2:\tilde{\mathcal F}\to \tilde E$ where $\tilde E$ is the embedded constraint submanifold
$$
\tilde E:=\{p\in E: u^\alpha(p)-\Psi^\alpha(p)=0\}.
$$
So we will now regard both the restricted contact distribution and $\tGamma$ as living on $\tilde E$ and we will use local coordinates $(t,x^i,u^a)$ on $\tilde E.$
Then there is the matter of the ``vertical" distribution on $\tilde E$. By virtue of the embedding of $\tilde E$ into $E$, at every point $p\in \tilde E$ we have  $\Sp\{\tilde V_a:=\pd{}{u^a}\} \subset T_p\tilde E$, and we denote the corresponding sub-bundle $\tilde V(\tilde E)$. %Note that this distribution is only really vertical for the first component of $\R\times T(M^m\times M^{n-m}).$

Having both contact and vertical distributions we can once again construct the vertical endomorphism $S=\theta^a\otimes \tV_a$. It is straightforward to show that $S$ is tensorial under coordinate transformations which respect the product structure of $M.$  Computing $\lie{\tGamma}S$ we obtain eigenvalues $0,1$ and $-1$ as before but this time the eigendistributions are respectively, $\Sp\{\tGamma,\pd{}{x^\alpha}\}$, $\Sp\{\tV_a\}$ and $\Sp\{\tilde H_b\}$, where
$$
\tilde H_a:=\frac{\partial}{\partial x^a} -\tGamma^b_a\frac{\partial}{\partial u^b}-\Psi^\beta_a\pd{}{x^\beta}
$$
with
$$
\tGamma_a^b:= -{\frac{1}{2}}\frac{\partial F^b}{\partial u^a}\ \text{and}\ \Psi^\beta_a:=-\pd{\Psi^\beta}{u^a}.
$$
Together these eigendistributions give a direct sum decomposition of $T\tilde E$.
The dual basis is $\{dt, \eta^\alpha,\tpsi^a, \theta^b\}$ where %\question{tilde on the $\tpsi^a$?}
$$
\eta^\alpha:=\theta^\alpha+\Psi^\alpha_b\theta^b,\ \ \tpsi^a:=\phi^a + \tGamma^a_b\theta^b,\  \ \theta^b:= dx^b-u^bdt.
$$
We will use the following projectors:
$$P_{\tGamma}=dt\otimes\tGamma,\ \  P_N=N:=\eta^\alpha\otimes\pd{}{x^\alpha},\ \  P_V=\tpsi^a\otimes \tV_a,\  \ P_H=\theta^a\otimes\tilde H_a
$$
along with the $(1,1)$ tensors $Q:=\tpsi^a\otimes\tilde H_a$ and $S=\theta^a\otimes \tV_a.$

The non-zero brackets of the basis fields provide some interesting additions to the unconstrained case:
\begin{align}
\notag [\tGamma,\tilde H_a]&=\tilde \Phi^b_a\tV_b+\tGamma^b_a\tilde H_b+K^\alpha_a\pd{}{x^\alpha},
&&[\tGamma, \tV_a]=-\tilde H_a+\tGamma^b_a\tV_b,\\
[\tilde H_a,\tV_b]&=\pd{\tGamma^c_a}{u^b}\tV_c+\pd{\Psi^\alpha_a}{u^b}\pd{}{x^\alpha} = [\tilde H_b,\tV_a],
&&[\tilde H_a,\tilde H_b]=\hat R^c_{ab}\tV_c+\check R^\beta_{ab}\pd{}{x^\beta},\label{basis brackets}\\
\notag[\tGamma,\pd{}{x^\alpha}]&=-\pd{\Psi^\beta}{x^\alpha}\pd{}{x^\beta}-\pd{F^c}{x^\alpha}\tV_c,
&&[\tilde H_a,\pd{}{x^\alpha}]=\pd{\tGamma^b_a}{x^\alpha}\tV_b+\pd{\Psi^\beta_a}{x^\alpha}\pd{}{x^\beta}
\end{align}
where
\begin{align*}
\tilde\Phi^b_a&:=-\pd{F^b}{x^a}-\tGamma(\tGamma^b_a)-\tGamma^c_a\tGamma^b_c+\Psi^\alpha_a\pd{F^b}{x^\alpha},\\
K^\alpha_a&:=-\tGamma(\Psi^\alpha_a)-\tilde H_a(\Psi^\alpha)+\tGamma^b_a\Psi^\alpha_b,\\
R^d_{ab}&:=\frac{1}{2}\left (\frac{\partial^2 F^d}{\partial x^a\partial u^b} - \frac{\partial^2 F^d}{\partial x^b\partial u^a}+\frac{1}{2}\left (\frac{\partial F^c}{\partial u^a}\frac{\partial^2 F^d}{\partial u^c\partial u^b} -
\frac{\partial F^c}{\partial u^b}\frac{\partial^2 F^d}{\partial u^c\partial u^a} \right )\right ),\\
\hat R^c_{ab}&:=R^c_{ab}- \Psi^\beta_b \pd{\tGamma^c_a}{x^\beta}+\Psi^\beta_a\pd{\tGamma^c_b}{x^\beta},\\
\check R^\beta_{ab}&:=\left (\pd{\Psi^\beta_a}{x^b}-\pd{\Psi^\beta_b}{x^a}\right )+\left( \tGamma^c_a\pd{\Psi^\beta_b}{u^c}-\tGamma^c_b\pd{\Psi^\beta_a}{u^c} \right) +\left( \Psi^\alpha_a \pd{\Psi^\beta_b}{x^\alpha} - \Psi^\alpha_b \pd{\Psi^\beta_a}{x^\alpha} \right).
\end{align*}
Also,
\begin{align*}
\tV_a(\tilde \Phi^c_b)-\tV_b(\tilde\Phi^c_a)&=3\hat R^c_{ab}\\
\tV_a(K^\alpha_b)-\tV_b(K^\alpha_a)&=2\check R^\alpha_{ab}.
\end{align*}

\subsection{A geometric description of the constraints}

The discussion so far has assumed that constraints described by equations~\eqref{CSODE2} are given as a submanifold $\tilde{E}$ of the jet manifold $J^1(\R,M)$, where $M$ has a suitable product structure, but it is also possible to describe these constraints using a nonlinear connection on a related bundle. This second connection (which is independent of any \SODE) is based on a construction given in~\cite{SCS97} using maps between jet bundles. We shall describe it using a commutative diagram, and to save space we shall adopt the following notation. We shall let $\mu : \R \times M^m \to \R$ and $\rho : \R \times M \to \R \times M^m$ be the two projections, and write $J^1\mu$ and $J^1\rho$ for the spaces of jets of local sections of these bundles. (The projection $\mu$ is, of course, the same as the coordinate function $t$.) We shall also need two fibre product spaces:
\begin{align*}
C & = (\R \times M) \times_{\R \times M^m} J^1\mu = M^{n-m} \times J^1\mu, \\
B & = J^1\rho \times_{\R \times M^m} J^1\mu.
\end{align*}
In addition, writing $\alpha : C \to J^1\mu$ for the projection on the second component, we shall need the jet space $J^1\alpha$.
\begin{center}
\begin{tikzcd}
J^1\alpha \arrow{drr}{\alpha^0_1} \\
B \arrow{rr} \arrow{dd} \arrow[dashrightarrow]{u} \arrow[dashrightarrow]{dr} && C \arrow{r}{\alpha} \arrow{dd} & J^1\mu \arrow{dd}{\mu^0_1} \\
& J^1(\R, M) \arrow{dr} \arrow[dashrightarrow]{ur} \\
J^1\rho \arrow[swap]{rr}{\rho^0_1} && \R \times M \arrow[swap]{r}{\rho} & \R \times M^m \arrow[swap]{r}{\mu} & \R
\end{tikzcd}
\end{center}

The three dashed arrows in this diagram are constructed as follows.
\begin{itemize}
\item The map $B \to J^1\alpha$ is an injection, obtained by taking a general element $\bigl( j^1_{\gamma(t)} \phi, j^1_t \gamma \bigr) \in B$, where $\gamma$ is a local section of $\mu$ in a neighbourhood of $t$ (a curve in $M^m$) and $\phi$ is a local section of $\rho$ in a neighbourhood of $\gamma(t)$. We map this general element to $j^1_{j^1_t\gamma}(\phi \circ \mu^0_1, \id_{J^1\mu}) \in J^1 \alpha$, and we regard $B$ as a submanfold of $J^1\alpha$.
\item The map $B \to J^1(\R,M)$ is a projection, obtained by taking the general element of $B$ and mapping it to $j^1_t(\phi \circ \gamma)$.
\item The map $J^1(\R,M) \to C$ is also a projection, obtained by taking a general element $j^1_t\delta \in J^1(\R,M)$ and mapping it to $(\delta(t), j^1_t(\rho \circ \delta))$.
\end{itemize}
With this diagram in place, we can now see how a nonlinear connection on the bundle $\alpha : C \to J^1\mu$ determines a constraint submanifold $\tilde{E} \subset J^1(\R,M)$, and vice versa.
\begin{center}
\begin{tikzcd}
B \arrow[shift left]{rr} \arrow{ddr} && C \arrow[shift left]{ll}{\sigma} \arrow[shift right, swap]{ddl}{\Psi} \\ \\
& J^1(\R,M) \arrow[shift right]{uur}
\end{tikzcd}
\end{center}
A connection on a bundle may be specified by a section of the corresponding jet bundle. So let $\sigma : C \to B \subset J^1\alpha$ be such a section, chosen to take its values in the submanifold $B$. Composing this section with the projection $B \to J^1(\R,M)$ then gives a map $\Psi$ which is a section of the second projection $J^1(\R,M) \to C$, and so is a diffeomorphism onto its image $\tilde{E} = \Psi(C)$.

Conversely, starting with a constraint submanifold $\tilde{E} \subset J^1(\R,M)$ transverse to the projection $J^1(\R,M) \to C$, and hence with a section $\Psi$ whose image is $\tilde{E}$, we may construct the `Chetaev bundle' in the following way. Let $\overline{S}$ denote the vertical endomorphism on $E = J^1(\R,M)$. For any $p \in \tilde{E}$ consider the subspace $T_p \tilde{E} \subset T_p E$ and its annihilator $T_p^\circ \tilde{E} \subset T^*_p E$. Acting on each annihilator with $\overline{S}$ gives a co-distribution on $\tilde{E}$, and therefore on its diffeomorphic image $\Psi^{-1}(\tilde{E}) = C$, defining a connection on $\alpha : C \to J^1\mu$. We also see, using the diffeomorphism $\Psi$, that the vertical endomorphism on $J^1\mu$ gives rise naturally to an endomorphism $S$ on $\tilde{E} \cong C = M^{n-m} \times J^1\mu$.

We may see the effect of these constructions in coordinates. As before, we have coordinates $(t, x^i)$ on $M$, so that coordinates on
$J^1\mu$ are $(t, x^a, u^a)$ and those on $C$ are $(t, x^i, u^a)$. The jet coordinates on $J^1\alpha$ are the derivatives of $x^\alpha$ with respect to $(t, x^a, u^a)$, which we write as $(v^\alpha, v^\alpha_b, w^\alpha_b)$, and the submanifold $B \subset J^1\alpha$ is given by $w^\alpha_b = 0$. The submanifold coordinates on $B$ are not, though, adapted to the projection $B \to E$; we see instead from
$$
\fpd{(\phi^\alpha \circ \gamma)}{t} = \fpd{\phi^\alpha}{t} + \fpd{\phi^\alpha}{x^b} \frac{d\gamma^b}{dt}
$$
that the projection is given by $u^\alpha = v^\alpha + v^\alpha_b u^b$. On the other hand, though, the projection $E \to C$ simply discards the coordinates $u^\alpha$. If, therefore, we are given the connection $\sigma$ in terms of functions $(\sigma^\alpha, \sigma^\alpha_b)$, the map $\Psi : C \to E$ is then given by $\Psi^\alpha = \sigma^\alpha + \sigma^\alpha_b u^b$. This construction gives affine constraints, with $\Psi^\alpha$ as affine functions of $u^b$, precisely when the functions $\sigma^\alpha$, $\sigma^\alpha_b$ are independent of $u^b$, and this arises when the connection $\sigma$ on $\alpha$ is projectable to a connection on $\rho$.

Now suppose instead that we are given a constraint submanifold $\tilde{E} \subset E$ in terms of equations $u^\alpha = \Psi^\alpha(t, x^i, u^a)$. The annihilator bundle $T^\circ \tilde{E} \subset T^*_{\tilde{E}}E$ is spanned by the forms $du^\alpha - d\Psi^\alpha$ (defined only on $\tilde{E}$, of course), so that $\overline{S}(T^\circ \tilde{E})$ is spanned by the forms
$$
\theta^\alpha - \fpd{\Psi^\alpha}{u^b} \theta^b = \theta^\alpha + \Psi^\alpha_b \theta^b = \eta^\alpha.
$$
These define horizontal vector fields
\begin{equation}
\label{AuxConn}
\fpd{}{t} + (\Psi^\alpha + u^b \Psi^\alpha_b) \fpd{}{x^\alpha} , \qquad \fpd{}{x^b} - \Psi^\alpha_b \fpd{}{x^\alpha}, \qquad \fpd{}{u^a}
\end{equation}
on $C \cong \tilde{E}$, and hence a connection $\sigma$ taking values in $B$ with coordinate representation
$$
\sigma^\alpha = \Psi^\alpha + u^b \Psi^\alpha_b, \qquad \sigma^\alpha_b = - \Psi^\alpha_b.
$$

In the next Section we shall construct a linear connection on $\tilde{E}$ corresponding to a constrained \SODE. Part of this linear connection will be derived from a linearisation of the connection $\sigma$ obtained from the constraints $\tilde{E}$, together with an auxiliary linear connection $D$ on the manifold $M^{n-m}$. The result is a connection $\bnabla^N$ on the vertical bundle of $\alpha$, given by
$$
\bnabla^N_X Y = D_{N(X)} Y +N [ X - N(X), Y]
$$
where $X$ and $Y$ are vector fields on $C$ with $Y$ vertical over $J^1 \mu$, and where $N$ is the vertical projector of the connection $\sigma$.

\subsection{A linear connection for constrained {\sc \bf SODEs}}

The generalisation of the Massa--Pagani connection to the constrained case is achieved by equipping the base constraint manifold $M^{n-m}$ with a linear connection, not necessarily symmetric, as indicated above. However, we will not attempt to prove that these are the only generalisations. So, suppose that this linear connection has coefficients $\Upsilon^\gamma_{\alpha\beta}$ in the basis $\{\pd{}{x^\alpha}\}$ for $\X(M^{n-m}).$ This induces the covariant derivative $\nabla^N$ on $\Img(N):$
\[
\nabla^N_XY=(X(Y^\gamma)+\Upsilon^\gamma_{\alpha\beta}X^\alpha Y^\beta)\pd{}{x^\gamma}.
\]
As in the unconstrained case the other submodule covariant derivatives are (without proof)
\begin{equation*}
\nabla^\tGamma_XY:=X(dt(Y))\tGamma,\quad \nabla^{\tH}_XY:=Q([X, S(Y)]),\quad \nabla^{\tV}_XY:=S([X,Q(Y)]).
\end{equation*}
Using the constructions of proposition \ref{propn1} and theorem \ref{thm1} we have (using $\tnabla$ to distinguish this from $\hnabla$ of the unconstrained case)

\begin{thm}\label{Main result}
\begin{equation}\label{constrained conn.}
\tnabla_XY:=\bnabla^\tGamma_X(P_{\tGamma}(Y))+\bnabla^N_X(N(Y))+\bnabla^{\tH}_X(P_H(Y))+\bnabla^{\tV}_X(P_V(Y)),
\end{equation}
explicitly,
\begin{align}
\tnabla_XY= &\ X(dt(Y))\tGamma +\nabla^N_{N(X)}(N(Y))+Q([P_H(X),S(Y)])+S([P_V(X),Q(Y)])\\
\notag&+N([X-N(X),N(Y)])+P_H([X-P_H(X),P_H(Y)])+P_V([X-P_V(X),P_V(Y)])
%\tnabla_XY&= X(dt(Y))\tGamma+N(X)(\eta^\alpha(Y))\pd{}{x^\alpha}+N(\mathcal{L}_{(I-N)(X)}(N(Y)))+Q(\mathcal{L}_X(S(Y)))\\
%\notag&+S(\mathcal{L}_X(Q(Y)))+P_H(\lie{X-\PH(X)}Q(S(Y)))+P_V(\lie{X-\PV(X)}S(Q(Y))),
\end{align}

is the unique linear connection on $\tilde E$ satisfying %\question{cdtns insufficient for uniqueness :-(}
\begin{align}
\notag&\tnabla\tGamma=0,\ \tnabla dt=0,\ \tnabla S=0,\ \tnabla Q=0,\ \tnabla_{\tV_a}\tV_b=0,\ \\
&P_H(X)=T(\tGamma,S(X)),\ P_V(X)=S(T(\tGamma,X))\label{conn props}\\
\notag&R(\tGamma,\tV_a)=0,\ \tnabla N=0,\ \tnabla_{\pd{}{x^\alpha}}\pd{}{x^\beta}=\Upsilon^\gamma_{\alpha\beta}\pd{}{x^\gamma},\\
\notag&(I-P_V)(T((I-N)(X),N(Y)))=0.
\end{align}

\end{thm}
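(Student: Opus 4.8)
The plan is to mirror the two-step proof of Theorem~\ref{M-P conn1} and Proposition~\ref{M-P conn2}, carrying the extra submodule $\Img(N)$ through every step. For \emph{existence}, observe that $T\tilde E=\Img(P_{\tGamma})\oplus\Img(N)\oplus\Img(P_H)\oplus\Img(P_V)$, that each summand carries the covariant derivative stated just above the theorem (the verifications of \eqref{cov deriv props} are word-for-word those of the Lemma preceding Theorem~\ref{M-P conn1}, and $\nabla^N$ is a covariant derivative because the $\Upsilon^\gamma_{\alpha\beta}$ are connection coefficients), and that each of $\nabla^{\tGamma},\nabla^N,\nabla^{\tH},\nabla^{\tV}$ maps into its own submodule. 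Theorem~\ref{thm1} then makes \eqref{constrained conn.} a covariant derivative, Proposition~\ref{propn1} gives the explicit form, and Corollary~\ref{cor2.3} yields $\tnabla P_{\tGamma}=\tnabla N=\tnabla P_H=\tnabla P_V=0$. One then reads off the non-zero basis components: the $\tGamma$- and $\tilde H$-derivatives of $\tV_a$ and $\tilde H_b$ come out as in the unconstrained Massa--Pagani connection (namely \eqref{covariant-deriv-defn-basis}, with $\tGamma,\tilde H,\tV$ in place of $\Gamma,H,V$), while the genuinely new components are
\begin{equation*}
\tnabla_{\tGamma}\pd{}{x^\alpha}=-\pd{\Psi^\beta}{x^\alpha}\pd{}{x^\beta},\qquad \tnabla_{\tilde H_a}\pd{}{x^\alpha}=\pd{\Psi^\beta_a}{x^\alpha}\pd{}{x^\beta},\qquad \tnabla_{\pd{}{x^\alpha}}\pd{}{x^\beta}=\Upsilon^\gamma_{\alpha\beta}\pd{}{x^\gamma},
\end{equation*}
all others vanishing; these come from Corollary~\ref{cor1}, the definitions of the submodule derivatives, and~\eqref{basis brackets}.

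With the components in hand, the conditions~\eqref{conn props} are short to check for $\tnabla$. The block $\tnabla\tGamma=\tnabla dt=\tnabla S=\tnabla Q=\tnabla N=0$ follows from Corollary~\ref{cor2.3} and $P_H=Q\circ S$, $P_V=S\circ Q$ exactly as in Proposition~\ref{M-P conn2}; $\tnabla_{\tV_a}\tV_b=0$ and $\tnabla_{\pd{}{x^\alpha}}\pd{}{x^\beta}=\Upsilon^\gamma_{\alpha\beta}\pd{}{x^\gamma}$ are read off above (the latter using that the extension $\bnabla^N$ collapses to $\nabla^N$ when both arguments lie in $\Img(N)$); $P_H(X)=T(\tGamma,S(X))$ and $P_V(X)=S(T(\tGamma,X))$ follow as in Proposition~\ref{M-P conn2} from $\Img(P_H),\Img(P_V)$ being the $-1$- and $+1$-eigenspaces of $\lie{\tGamma}S$ and Corollary~\ref{cor1}; and $R(\tGamma,\tV_a)=0$ is the same direct computation with the components and~\eqref{basis brackets}. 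Only the last condition is new: by bilinearity it reduces to basis pairs, where $(I-N)X$ is non-zero (and equal to $X$) only for $X\in\{\tGamma,\tV_a,\tilde H_a\}$ and $N(Y)$ is non-zero only on the $\Img(N)$-part of $Y$, so it suffices that $T(\tGamma,\pd{}{x^\alpha})$, $T(\tV_a,\pd{}{x^\alpha})$ and $T(\tilde H_a,\pd{}{x^\alpha})$ be purely vertical, which Corollary~\ref{cor1} and~\eqref{basis brackets} give as $\pd{F^c}{x^\alpha}\tV_c$, $0$ and $-\pd{\tGamma^b_a}{x^\alpha}\tV_b$ respectively. This also shows that~\eqref{conn props} is self-consistent.

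For \emph{uniqueness}, let $\tnabla$ be any linear connection satisfying~\eqref{conn props}. As in Proposition~\ref{M-P conn2}, $\tnabla S=\tnabla Q=\tnabla N=0$ with $\tnabla P_{\tGamma}=0$ force $\tnabla_X$ to commute with all four projectors, so $\tnabla$ preserves the decomposition of $T\tilde E$. The components along $\{\tGamma,\tV_a,\tilde H_a\}$ are pinned down by the argument of Proposition~\ref{M-P conn2} verbatim: $\tnabla S=0$ ties the $\tV$- and $\tilde H$-components together, $P_H(X)=T(\tGamma,S(X))$ at $X=\tilde H_a$ gives $\tnabla_{\tGamma}\tV_a=\tGamma^b_a\tV_b$ and hence $\tnabla_{\tGamma}\tilde H_a=\tGamma^b_a\tilde H_b$, $\tnabla_{\tV_a}\tV_b=0$ is given and forces $\tnabla_{\tV_a}\tilde H_b=0$, and $R(\tGamma,\tV_a)=0$ yields $\tnabla_{\tilde H_a}\tilde H_b=\tV_a(\tGamma^c_b)\tilde H_c$. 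For the components valued in $\Img(N)$: $\tnabla_{\pd{}{x^\alpha}}\pd{}{x^\beta}$ is prescribed outright; $\tnabla_{\tGamma}\pd{}{x^\alpha}$, $\tnabla_{\tilde H_a}\pd{}{x^\alpha}$, $\tnabla_{\tV_a}\pd{}{x^\alpha}$ and $\tnabla_{\pd{}{x^\alpha}}\tilde H_a$ come from expanding the last condition at the basis pairs $(\tGamma,\pd{}{x^\alpha})$, $(\tilde H_a,\pd{}{x^\alpha})$ and $(\tV_a,\pd{}{x^\alpha})$ --- since $\tnabla$ respects the splitting, each such torsion decomposes over distinct summands, so vanishing of its non-$\Img(P_V)$ part separates into one equation per summand, which with~\eqref{basis brackets} gives exactly the values above; finally $\tnabla_{\pd{}{x^\alpha}}\tV_a=S(\tnabla_{\pd{}{x^\alpha}}\tilde H_a)=0$ from $\tnabla S=0$ and $\tnabla_{\pd{}{x^\alpha}}\tGamma=0$ from $\tnabla\tGamma=0$. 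All basis components then coincide with those of the explicit connection.

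The step I expect to be delicate is this last one. None of the other conditions in~\eqref{conn props} constrains the $\Img(N)$-valued ``mixed'' components $\tnabla_{\tGamma}\pd{}{x^\alpha}$, $\tnabla_{\tilde H_a}\pd{}{x^\alpha}$ and $\tnabla_{\tV_a}\pd{}{x^\alpha}$, so the single tensorial equation $(I-P_V)(T((I-N)(X),N(Y)))=0$ must carry all of that information, and the crux is to confirm that on basis pairs it really decouples into one scalar equation per component rather than leaving a linear combination undetermined --- which it does because $I-N$ and $N$ pick out complementary summands and $\tnabla$ preserves the splitting. Everything else is a transcription of Theorem~\ref{M-P conn1} and Proposition~\ref{M-P conn2} with the index block $\alpha,\beta,\gamma$ carried along.
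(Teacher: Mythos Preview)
Your proposal is correct and follows essentially the same approach as the paper's own proof: existence via Theorem~\ref{thm1} and the submodule derivatives, verification of the properties~\eqref{conn props} by the arguments of Proposition~\ref{M-P conn2} together with Corollary~\ref{cor1} and the bracket relations~\eqref{basis brackets} for the new torsion condition, and uniqueness by showing that~\eqref{conn props} forces the basis components~\eqref{conn cmpts}, with the mixed $\Img(N)$-components extracted from $(I-P_V)(T((I-N)(X),N(Y)))=0$ at the basis pairs $(\tGamma,\pd{}{x^\alpha})$, $(\tV_a,\pd{}{x^\alpha})$, $(\tH_a,\pd{}{x^\alpha})$. Your closing observation that this last condition decouples component-by-component because $\tnabla$ preserves the splitting is exactly the mechanism the paper uses (though the paper leaves it implicit).
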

\begin{proof}

%It is straightforward to show from \eqref{constrained conn.} that $\tnabla_X(fY)=f\tnabla_XY +X(f)Y$ and the proof that $\tnabla_{fX}Y=f\tnabla_XY$ follows that in the %unconstrained case.
Equation \eqref{constrained conn.} describes a linear connection on $\tE$ by virtue of theorem \ref{thm1}.

The non-zero basis components calculated from \eqref{constrained conn.} and using \eqref{basis brackets} are:
\begin{align}
\notag \tnabla_{\tGamma} \tH_a &= \tGamma^b_a \tH_b,     &&\tnabla_{\tGamma} \tV_a = \tGamma^b_a \tV_b,\\
\label{conn cmpts} \tnabla_{\tH_a}\tH_b& = \frac{\partial \tGamma^c_{a}}{\partial u^b} \tH_c,  &&\tnabla_{\tH_a}\tV_b = \frac{\partial \tGamma^c_{a}}{\partial u^b} \tV_c, \\
\notag \tnabla_{\tGamma}\pd{}{x^\alpha}&=-\pd{\Psi^\beta}{x^\alpha}\pd{}{x^\beta}, && \tnabla_{\tH_a}\pd{}{x^\alpha}=\pd{\Psi^\beta_a}{x^\alpha}\pd{}{x^\beta},\\
\notag \tnabla_{\pd{}{x^\alpha}}\pd{}{x^\beta}&=\Upsilon^\gamma_{\alpha\beta}\pd{}{x^\gamma}&& \
\end{align}
(resembling the unconstrained case).

The second part of the proof entails showing that $\tnabla$ in \eqref{constrained conn.} satisfies the given properties \eqref{conn props}, which simultaneously establishes that at least one connection satisfies them and that they are self-consistent. Finally, we must show that the properties \eqref{conn props} produce the same components as \eqref{constrained conn.}, so establishing uniqueness.

The derivation of all but the last three properties are achieved from \eqref{constrained conn.} exactly as for proposition \ref{M-P conn2}.
Of the remaining properties $\tnabla N=0$ follows from corollary \ref{cor2.3}; the second last follows from the definition of $\nabla^N$ and finally, $(I-P_V)(T((I-N)(X),N(Y)))=0$ is established by observing from \eqref{basis brackets} and \eqref{conn cmpts} that
\begin{equation*}
T(\tGamma,\pd{}{x^\alpha})=\pd{F^c}{x^\alpha}\tV_c,\quad
T(\tV_a,\pd{}{x^\alpha})=0,\quad
T(\tH_a,\pd{}{x^\alpha})=-\pd{\tGamma^b_a}{x^\alpha}\tV_b,
\end{equation*}
or by an appeal to the first case of corollary \ref{cor1}.

To finish the proof we now show that properties \eqref{conn props} produce the components \eqref{conn cmpts}. In part this closely follows the proof of proposition \ref{M-P conn2}.

Firstly note that $\tnabla_X$ commutes with both $\PH$ and $\PV$ because $\tnabla S=0=\tnabla Q$ and $\PH=Q\circ S$ and $\PV=S\circ Q.$ Along with $\tnabla P_\tGamma =0$ and $\tnabla N=0$ this means that $\tnabla$ preserves our direct sum decomposition of  $T\tilde E.$

Now consider $\tnabla_\tGamma X$. The property $\tnabla S=0$ gives
\[
\tnabla_\tGamma \tV_a=\tnabla_\tGamma(S(\tH_a))=S(\tnabla_\tGamma\tH_a)
\]

Now we apply $\PH(X)=T(\tGamma,S(X))$ with $X=\tH_a$:
\begin{align*}
\tH_a&=\PH(\tH_a)=T(\tGamma,S(\tH_a))=\tnabla_\tGamma \tV_a - [\tGamma, \tV_a]\\
&\implies \tnabla_\tGamma \tV_a=\tGamma^b_a \tV_b\quad \text{and hence}\quad \tnabla_\tGamma \tH_a=\tGamma^b_a \tH_b.
\end{align*}
Now consider $(I-P_V)(T((I-N)(X),N(Y))=0$ with $X=\tGamma$ and $Y=\pd{}{x^\alpha},$ remembering that $\tnabla_X\pd{}{x^\alpha}\in \text{Im}(N)$ and using \eqref{basis brackets}:
\begin{align*}
&(I-P_V)(T(\tGamma,\pd{}{x^\alpha}))=0\\
\implies &(I-P_V)(\tnabla_\tGamma\pd{}{x^\alpha}-[\tGamma,\pd{}{x^\alpha}])=0\\
\implies&\tnabla_\tGamma\pd{}{x^\alpha}=N([\tGamma,\pd{}{x^\alpha}])\\
\implies &\tnabla_\tGamma\pd{}{x^\alpha}=-\pd{\Psi^\beta}{x^\alpha}\pd{}{x^\beta}.
\end{align*}

Next we show that both $\tnabla_{\tV_a}\tH_b$ and $\tnabla_{\tV_a}\pd{}{x^\alpha}$ are zero.
\[
S(\tnabla_{\tV_a}\tH_b)=\tnabla_{\tV_a}(S(\tH_b))=\tnabla_{\tV_a}\tV_b=0\ \text{by assumption}
\]
but $\tnabla_{\tV_a}H_b$ is horizontal and so it is zero. And
\[
0=(I-P_V)(T(\tV_a,\pd{}{x^\alpha}))=(I-P_V)(\tnabla_{\tV_a}\pd{}{x^\alpha}-\tnabla_{\pd{}{x^\alpha}}\tV_a)=\tnabla_{\tV_a}\pd{}{x^\alpha}.
\]
Now consider $\tnabla_{\tH_a}X$. Again using $\tnabla S=0$
\[
S(\tnabla_{\tH_a}\tH_b)=\tnabla_{\tH_a}(S(\tH_b))=\tnabla_{\tH_a}\tV_b.
\]

 The curvature condition $R(\tGamma,\tV_a)=0$ gives the values of these components as follows.
 \begin{align*}
 0=R(\tGamma,\tV_a)\tH_b&=\tnabla_\tGamma(\tnabla_{\tV_a}\tH_b)-\tnabla_{\tV_a}(\tnabla_\tGamma \tH_b)-\tnabla_{[\tGamma,\tV_a]}\tH_b\\
 &=-\tnabla_{\tV_a}(\tGamma^c_b\tH_c)-\tnabla_{-\tH_a+\tGamma^d_a\tV_d}\tH_b\\
 &=-\tnabla_{\tV_a}(\tGamma^c_b\tH_c)-\tnabla_{-\tH_a}\tH_b\\
 \implies\tnabla_{\tH_a}\tH_b&=\tV_a(\tGamma^c_b)\tH_c \quad\text{and so}\quad \tnabla_{\tH_a}\tV_b=\tV_a(\tGamma^c_b)\tV_c.
 \end{align*}
 To confirm $\tnabla_{\tH_a}\pd{}{x^\alpha}=\pd{\Psi^\beta_a}{x^\alpha}\pd{}{x^\beta},\ \tnabla_{\pd{}{x^\alpha}}\tH_a=0$
 and $\tnabla_{\pd{}{x^\alpha}}\tV_a=0$ we use
 \[
 0=(I-P_V)(T(\tH_a,\pd{}{x^\alpha}))=(I-P_V)(\tnabla_{\tH_a}\pd{}{x^\alpha}-\tnabla_{\pd{}{x^\alpha}}\tH_a-[\tH_a,\pd{}{x^\alpha}])
 \]
 Hence
 \[
 \tnabla_{\tH_a}\pd{}{x^\alpha}=N([\tH_a,\pd{}{x^\alpha}])=\pd{\Psi^\beta_a}{x^\alpha}\pd{}{x^\beta}\quad\text{and}\quad \tnabla_{\pd{}{x^\alpha}}\tH_a=P_H([\tH_a,\pd{}{x^\alpha}])=0.
 \]
Finally,
\[
\tnabla_{\pd{}{x^\alpha}}\tV_a=\tnabla_{\pd{}{x^\alpha}}(S(\tH_a))=S(\tnabla_{\pd{}{x^\alpha}}\tH_a)=0
\]

which completes the proof.
\end{proof}

Note: The condition $P_H(X)=T(\tGamma,S(X))$ is again redundant in the constrained case.

The shape map is (compare with \eqref{Shape Map(1)} in the unconstrained case)
\[
A_\tGamma=-\mathcal{L}_\tGamma N\circ N-\mathcal{L}_\tGamma Q\circ S+Q
\]
where, in this case,
\[
\mathcal{L}_\tGamma Q\circ S=\tilde \Phi + K
\]
with $\tilde\Phi=\tilde\Phi^a_b\theta^b\otimes \tV_a$ and $K=K^\alpha_a\theta^a\otimes \pd{}{x^\alpha}.$
As a result
\[
A_\tGamma(\tGamma)=0,\quad A_\tGamma(\tV_a)=\tH_a,\quad A_\tGamma(\tH_a)=-\tilde \Phi^b_a\tV_b-K^\alpha_a\pd{}{x^\alpha},\quad A_\tGamma(\pd{}{x^\alpha})=\pd{F^c}{x^\alpha}\tV_c.
\]
%\question{\textcolor{red}{check $A_\tGamma(\pd{}{x^\alpha})=\pd{F^c}{x^\alpha}\tV_c$ }}
So, if $X=X^\alpha\pd{}{x^\alpha}+\bar X^a\tH_a+\hat X^a\tV_a$ is an eigenvector of $A_\tGamma$ belonging to $\mu$ then
\begin{align*}
\mu X^\alpha&=-\bar X^aK^\alpha_a,\\
\mu \bar X^a&=\hat X^a,\\
\mu\hat X^b&=X^\alpha\pd{F^b}{x^\alpha}-\tilde\Phi^b_a\bar X^a.
\end{align*}
A little manipulation shows that the necessary condition for non-zero solutions $X$ is
\begin{equation}
\det(\mu^3I+\Lambda_\mu)=0\quad
\text{where}\quad (\Lambda_\mu)^a_b:=K^\alpha_b\pd{F^a}{x^\alpha}+\mu\Phi^a_b.
\end{equation}

The role of the normal forms of $\Lambda_\mu$ in the classification of constrained systems is yet to be explored.

One important case is revealed by considering the geometry of the shape maps associated with the constraint submanifold,

\[
A_{\pd{}{x^\alpha}}(X)=\tnabla_{\pd{}{x^\alpha}}X-\left[\pd{}{x^\alpha},X\right].
\]
Notice that
\[
A_{\pd{}{x^\alpha}}(\tGamma)=[\tGamma,\pd{}{x^\alpha}]=-\pd{\Psi^\beta}{x^\alpha}\pd{}{x^\beta}-\pd{F^c}{x^\alpha}V_c
\]
hence
\begin{equation}\label{constraint deformation}
A_{\pd{}{x^\alpha}}(\tGamma)=0 \iff \pd{\Psi^\beta}{x^\alpha}=0=\pd{F^c}{x^\alpha}.
\end{equation}
In addition,
\[
A_{\pd{}{x^\alpha}}(\tGamma)=0 \implies A_{\pd{}{x^\alpha}}(\tH_a)=0
\]
while $A_{\pd{}{x^\alpha}}(\pd{}{x^\beta})=\Upsilon^\gamma_{\alpha\beta}\pd{}{x^\gamma}.$
The equivalence \eqref{constraint deformation} shows how the absence of $x^\alpha$ in equations \eqref{CSODE1}, \eqref{CSODE2} decouples the tangent space deformations induced by the constraints from the dynamics.

As in the unconstrained case, the torsion contains all the features of interest:
\begin{align*}
&T(\tGamma, \tV_a)=\tH_a, &&T(\tGamma, \tH_a)=-\tPhi^b_a\tV_b-K^\alpha_a\pd{}{x^\alpha}, &&&T(\tGamma,\pd{}{x^\alpha})=\pd{F^c}{x^\alpha}\tV_c, \\
&T(\tV_a,\tH_b)=\pd{\Psi^\alpha_b}{u^a}\pd{}{x^\alpha},
&&T(\tH_a,\tH_b)=-{\hat R}^c_{ab}\tV_c - {\check R}^\beta_{ab}\pd{}{x^\beta},
&&&T(\tH_a,\pd{}{x^\alpha})=-\pd{\tGamma^b_a}{x^\alpha}\tV_b,\\ &\ &&T(\pd{}{x^\alpha},\pd{}{x^\beta})=(\Upsilon^\gamma_{\alpha\beta}-\Upsilon^\gamma_{\beta\alpha})\pd{}{x^\gamma},
\end{align*}
all other components being zero.

\subsection{Nonholonomic dynamics  and the Lagrange-d'Alembert Principle}\label{non-hol}\ %\question{David M's new section}
As discussed in the introduction one of the most interesting examples of application of our theory for constrained SODEs comes from nonholonomic mechanics \cite{NeiFuf72,Arnold,Bloch03,Cortes2002}. See also \cite{IbMa91} for other applications.

Assume that the system is subjected to  a set of non-holonomic constraints determined by a submanifold ${\tilde E}$ of $J^1({\mathbb R}, M)$ of codimension  $m$ (typically in nonholonomic dynamics ${\tilde E}$ will be an affine subbundle of $J^1({\mathbb R}, M)$).

This means that the only allowable evolutions are sections $\phi$ of $t: {\mathbb R}\times M\rightarrow {\mathbb R}$, $j_t^1\phi$  such that $j_t^1\phi\in {\tilde E}$ for all $t$.

If we take coordinates $(t, x^a, x^{\alpha})$ on ${\mathbb R}\times M$ and induced coordinates $(t, x^a, x^{\alpha}, \dot{x}^a, \dot{x}^\alpha)$ on $J^1({\mathbb R}, M)$ then we can  assume that locally $\tilde{E}$ is defined by the vanishing of the constraints (\ref{CSODE2})
\[
\dot x^\alpha  =\Psi^\alpha(t,x^b,x^\beta,\dot x^b).
\]
(We use $\dot x^i$ instead of $u^i$ in the context of nonholonomic mechanics.)

Consider a Lagrangian function $L: J^1({\mathbb R}, M)\rightarrow  {\mathbb R}$ with positive definite Hessian, then the equations of motion of a nonholonomic system are determined by a generalisation of the classical Lagrange-d'Alembert principle.
\begin{defn}(Chetaev principle \cite{Chetaev1934})
	The  solutions of a nonholonomic system $(L, \tilde{E})$  are sections $\phi$ of $t: {\mathbb R}\times M \rightarrow {\mathbb R}$ such that $j_t^1\phi\in {\tilde E}$  and satisfy
	\[
	\delta \int_{0}^T L(t, x(t), \dot{x}(t))\; dt =0
	\]
	where the variations are constrained by:
	$\displaystyle{
	\delta x^{\alpha}=\frac{\partial \Psi^\alpha}{\partial \dot{x}^a}\delta x^a}$.
	
\end{defn}

Using the usual arguments of variational calculus  we obtain the nonholonomic equations:
\begin{equation*}\label{nh-eq}
\begin{split}
\frac{d}{dt}\left( \frac{\partial L}{\partial \dot{x}^a}\right)-\frac{\partial L}{\partial x^a} =&-\lambda_\alpha \frac{\partial \Psi^\alpha}{\partial \dot{x}^a},\\
\frac{d}{dt}\left( \frac{\partial L}{\partial \dot{x}^\alpha}\right)-\frac{\partial L}{\partial x^\alpha} =&\lambda_\alpha, \\
\dot x^\alpha  -\Psi^\alpha(t,x^b,x^\beta,\dot x^b)=&0
\end{split}
\end{equation*}
where $\lambda_{\alpha}$ are Lagrange multipliers to be determined.

%%%%%%%%%%%%%%%%%%%%%%%%%%%%

%\color{blue}
These equations are equivalent to
\begin{equation}\label{nh-eq-1}
\begin{split}
&\frac{d}{dt}\left( \frac{\partial L}{\partial \dot{x}^a}\right)-\frac{\partial L}{\partial x^a} +
\left[\frac{d}{dt}\left( \frac{\partial L}{\partial \dot{x}^\alpha}\right)-\frac{\partial L}{\partial x^\alpha}\right]\frac{\partial \Psi^\alpha}{\partial \dot{x}^a}=0 \; , \quad 1\leq a\leq m, \\
&\dot x^\alpha  -\Psi^\alpha(t,x^b,x^\beta,\dot x^b)=0, \qquad m+1\leq \alpha\leq n.
\end{split}
\end{equation}

Now, by first developing  the first equation of  (\ref{nh-eq-1}) and using the time  derivative of the second equation along solutions we obtain
\begin{equation}\label{nh-eq-2}
\begin{split}
&\left[\frac{\partial^2 L}{\partial \dot{x}^a\partial \dot{x}^b}
+\frac{\partial^2 L}{\partial \dot{x}^a\partial \dot{x}^\alpha}\frac{\partial \Psi^{\alpha}}{\partial \dot{x}^b}
+\frac{\partial^2 L}{\partial \dot{x}^b\partial \dot{x}^\alpha}\frac{\partial \Psi^{\alpha}}{\partial \dot{x}^a}
+\frac{\partial^2 L}{\partial \dot{x}^\alpha\partial \dot{x}^\beta}\frac{\partial \Psi^{\alpha}}{\partial \dot{x}^a}\frac{\partial \Psi^{\beta}}{\partial \dot{x}^b}\right]\ddot{x}^b+ G_a(t,x^b,x^\beta,\dot x^b)=0\; .\\
\end{split}
\end{equation}
Denote by
\[
{\mathcal C}_{ab}=\left[\frac{\partial^2 L}{\partial \dot{x}^a\partial \dot{x}^b}
+\frac{\partial^2 L}{\partial \dot{x}^a\partial \dot{x}^\alpha}\frac{\partial \Psi^{\alpha}}{\partial \dot{x}^b}
+\frac{\partial^2 L}{\partial \dot{x}^b\partial \dot{x}^\alpha}\frac{\partial \Psi^{\alpha}}{\partial \dot{x}^a}
+\frac{\partial^2 L}{\partial \dot{x}^\alpha\partial \dot{x}^\beta}\frac{\partial \Psi^{\alpha}}{\partial \dot{x}^a}\frac{\partial \Psi^{\beta}}{\partial \dot{x}^b}\right] \; .
\]

If we assume that the Hessian matrix
\[
(W_{ij})=\left( \frac{\partial^2 L}{\partial \dot{x}^i\partial\dot{x}^j}\right)
\]
 is positive definite, then pointwise
the matrix $
({\mathcal C}_{ab})
$, $1\leq a, b\leq m$,
represents the    restriction of a  positive definite quadratic form  given by the matrix $(W_{ij})$ to the subspace spanned by the $m$ linear independent vectors
\[
\begin{split}
&(1,0,\ldots, 0;  \frac{\partial \Psi^{m+1}}{\partial \dot{x}^1}, \ldots, \frac{\partial \Psi^{n}}{\partial \dot{x}^1}) \; , \\
&(0,1,\ldots, 0;  \frac{\partial \Psi^{m+1}}{\partial \dot{x}^2}, \ldots, \frac{\partial \Psi^n}{\partial \dot{x}^2}) \; ,\dots, \\
&(0,0,\ldots, 1;  \frac{\partial \Psi^{m+1}}{\partial \dot{x}^m}, \ldots, \frac{\partial \Psi^{n}}{\partial \dot{x}^m}). \;
\end{split}
\]
Therefore, from the  positive definiteness of $(W_{ij})$,  we deduce the positive definiteness of the restriction to any subspace and, in particular,  we deduce the regularity of the matrix $({\mathcal C}_{ab})$.

%These equations are equivalent to
%\begin{equation}\label{nh-eq-1}
%\begin{split}
%&\frac{d}{dt}\left( \frac{\partial L}{\partial \dot{x}^a}\right)-\frac{\partial L}{\partial x^a} +
%\left[\frac{d}{dt}\left( \frac{\partial L}{\partial \dot{x}^\alpha}\right)-\frac{\partial L}{\partial x^\alpha}\right]\frac{\partial \Psi^\alpha}{\partial \dot{x}^a}=0\\
%&\dot x^\alpha  -\Psi^\alpha(t,x^b,x^\beta,\dot x^b)=0.
%\end{split}
%\end{equation}
%
%If the Hessian matrix
%\[
%(W_{ij})=\left( \frac{\partial^2 L}{\partial \dot{x}^i\partial\dot{x}^j}\right)
%\]
%is positive definite then  we can write (\ref{nh-eq-1}) as
 Then  we can write (\ref{nh-eq-1}) as equations (\ref{CSODE1}) and  (\ref{CSODE2}):
\begin{eqnarray*}
	\ddot{x}^a&=&F^a(t, x^b, x^{\beta}, \dot{x}^b),\\
	\dot{x}^{\alpha}&=&\Psi^{\alpha}(t, x^b, x^{\beta}, \dot{x}^b).
\end{eqnarray*}

%\color{black}
 %%%%%%%%%%%%%%%%%%%%%%%%%%%%%%%%%%%%%%%%%%%%%%%%%%%%%

\section{Examples}\label{xmpls}

There are many examples in the literature of the Massa-Pagani connection for the unconstrained case (in both fixed and arbitrary dimension) so we restrict our attention to our new connection for the constrained scenario. Since the examples come from nonholonomic mechanics we will again use $\dot x^i$ instead of $u^i.$

\begin{ex}[Knife edge]
	Consider a sled on the plane restricted to move in the direction of its orientation. The configuration manifold is $(S^1\times\R)\times\R$ with coordinates $(\phi,x,y)$, where $(x,y)$ denotes the position of the contact point on the plane and $\phi$ denotes the orientation. The system has Lagrangian $L=\frac{1}{2}(\dot{x}^2+\dot{y}^2+\dot{\phi}^2)$ and nonholonomic constraint $\dot{y}=\tan(\phi)\dot{x}$, see \cite{Bloch03}. The nonholonomic equations are
	\begin{eqnarray*}
		\ddot{\phi} &=&0 \, , \\
		\ddot{x} &=& -\dot{x}\dot{\phi}\tan{\phi} \, , \\
		\dot{y}&=&\tan(\phi)\dot{x} = \Psi^3 \, .
	\end{eqnarray*}
	
	Since $F^1=0$ we get $\tilde{\Phi}^1_i=0$. We also have
	\begin{eqnarray*}
	\tilde{\Phi}^2_1=-\frac{1}{8} \dot{\phi} (\cos (2 \phi)-5) \sec ^2(\phi) \dot{x} \, , && \tilde{\Phi}^2_2=\frac{1}{8} \dot{\phi}^2 (\cos (2 \phi)-5) \sec ^2(\phi) \, , \\
	K^3_1=-\sec^2(\phi)\dot{x}\, , && K^3_2= \sec^2(\phi)\dot{\phi} \, .
	\end{eqnarray*}

	The components of the Massa-Pagani connection given in (\ref{conn cmpts}) are
	
	\begin{eqnarray*}
		\tilde{\nabla}_{\tilde{\Gamma}} \tilde{H}_1 = \tilde{\Gamma}^2_1\tilde{H}_2 =  -\dot{x}\tan{\phi} \tilde{H}_2\, , && \tilde{\nabla}_{\tilde{\Gamma}} \tV_1 = -\dot{x}\tan{\phi}  \tV_2\, , \\
		\tilde{\nabla}_{\tilde{\Gamma}} \tilde{H}_2 = \tilde{\Gamma}^2_2\tilde{H}_2 =  -\dot{\phi}\tan{\phi}  \tilde{H}_2\, , && \tilde{\nabla}_{\tilde{\Gamma}} \tV_2 = -\dot{\phi}\tan{\phi}  \tV_2\, , \\
		\tilde{\nabla}_{\tilde{H}_1} \tilde{H}_2 = \tilde{\nabla}_{\tilde{H}_2} \tilde{H}_1 = -\tan{\phi}  \tilde{H}_2\, , && \tilde{\nabla}_{\tilde{H}_1} \tV_2 = \tilde{\nabla}_{\tilde{H}_2} \tV_1 = -\tan{\phi} \tV_2 \, , \\
		\tilde{\nabla}_{\tilde{\Gamma}}\frac{\partial}{\partial y} = 0 \, , &&
		\tilde{\nabla}_{\tilde{H}_a}\frac{\partial}{\partial y} = 0 \, , \\
		\tilde{\nabla}_{\frac{\partial}{\partial y}}\frac{\partial}{\partial y} = 0 \, , &&
	\end{eqnarray*}
	where
	$$
	\tV_1=\frac{\partial}{\partial \dot{\phi}}, \quad \tV_2=\frac{\partial}{\partial \dot{x}}, \quad \tilde{H}_1=\frac{\partial}{\partial \phi} - \tilde{\Gamma}^2_1\frac{\partial}{\partial \dot{x}} = \frac{\partial}{\partial \phi} + \dot{x}\tan{\phi} \frac{\partial}{\partial \dot{x}} ,
	\quad \tilde{H}_2= \frac{\partial}{\partial x} + \dot{\phi}\tan{\phi} \frac{\partial}{\partial \dot{x}} - \tan{\phi} \frac{\partial}{\partial y} \, .
	$$

	The shape map $A_{\Gamma}$ has eigenvalues $\mu_1=0$ with corresponding eigenspace
	$$
	E_1=\Sp\left\{\frac{\partial}{\partial y}, \dot{\phi}\tilde{H}_1+\dot{x}\tilde{H}_2, \tilde{\Gamma}\right\}
	$$
	and $\mu_{2,3}=\pm\sqrt{-\Phi^2_2}$ with corresponding eigenspaces
	\begin{eqnarray*}
		E_2&=&\Sp\left\{-K^3_2\frac{\partial}{\partial y} + \mu_2\tilde{H}_2 + \mu_2^2\tilde{V}_2\right\}\, ,\\
		E_3&=&\Sp\left\{-K^3_2\frac{\partial}{\partial y} + \mu_3\tilde{H}_2 + \mu_3^2\tilde{V}_2\right\}\, .
	\end{eqnarray*}

\end{ex}

\begin{ex}[Ball rolling on a spherical surface] Consider a ball of radius $r$ and mass $m$ that is rolling without sliding on the inner side of half a sphere of radius $R+r$. We can take coordinates $(x,y)$ for the centre of the ball, which moves on a half sphere $\Sigma$ of radius $R$, with $z=-\sqrt{R^2-x^2-y^2}+R$ and we can take Euler angles for the orientation of the ball, that is, as local coordinates for $SO(3)$ (see \cite[Section 5.3]{BalSan} and \cite{{NeiFuf72}}). The configuration space is $SO(3)\times\Sigma$.
	
	The Lagrangian of the system is
	\begin{eqnarray*}
		\bar{L}(x,y,\dot{x},\dot{y},w_1,w_2,w_3)&=&\frac{m}{2}\left(\frac{R^2-y^2}{R^2-x^2-y^2}\dot{x}^2 + \frac{2xy}{R^2-x^2-y^2}\dot{x}\dot{y}  + \frac{R^2-x^2}{R^2-x^2-y^2}\dot{y}^2\right) \\
		&& + \frac{I}{2}(w_1^2+w_2^2+w_3^2) - mg(R-\sqrt{R^2-x^2-y^2}) \, ,
	\end{eqnarray*}
	where $w=(w_1,w_2,w_3)$ is the angular velocity of the ball in space coordinates, $I=\frac{2}{5}mr^2$ and $g$ denotes gravity acceleration. The nonholonomic constraints of rolling without sliding are
	$$
	\dot{x}=-r(w_2n_3-w_3n_2),
	\quad
	\dot{y}=-r(w_3n_1-w_1n_3)\, ,
	$$
	where $n(x,y)=(n_1,n_2,n_3)$ is the outward normal unit vector of the half sphere on which the centre of the ball moves, so
	$$
	n_1=\frac{x}{R}, \quad n_2=\frac{y}{R} \quad \mbox{and} \quad n_3=\frac{-\sqrt{R^2-x^2-y^2}}{R}.
	$$
	
	In terms of Euler angles $0<\varphi,\psi<2\pi$, $0<\theta<\pi$ we have
	\begin{eqnarray*}
		w_1&=&\dot{\theta}\cos{\varphi} + \dot{\psi}\sin{\varphi}\sin{\theta}\, , \\
		w_2&=&\dot{\theta}\sin{\varphi} - \dot{\psi}\cos{\varphi}\sin{\theta}\, , \\
		w_3&=&\dot{\varphi} + \dot{\psi}\cos{\theta}\, .
	\end{eqnarray*}
	Therefore, using spherical coordinates $(\alpha,\beta)$ for the spherical surface and Euler angles $(\varphi,\psi,\theta)$ for the orientation of the ball we get the Lagrangian
	$$
	L(\varphi,\psi,\theta,\alpha,\beta,\dot{\varphi},\dot{\psi},\dot{\theta},\dot{\alpha},\dot{\beta})= \frac{mR}{2}(\dot{\alpha}^2+\dot{\beta}^2\sin^2\alpha)
	+ \frac{I}{2}(\dot{\psi}^2 + \dot{\theta}^2 + \dot{\varphi}^2 + 2\dot{\psi}\dot{\varphi}\cos\theta) -mg(R+R\cos\alpha)
	$$
	and the nonholonomic constraints
	\begin{eqnarray*}
		\dot{\alpha} &=& \frac{r}{R}\left( \dot{\theta}\sin(\beta-\varphi) + \dot{\psi}\sin\theta\cos(\varphi-\beta) \right) = \Psi^4 \, ,\\
		\dot{\beta} &=& -\frac{r}{R}\left( \dot{\varphi} + \dot{\psi}(\cos\theta + \cot\alpha\sin\theta\sin(\beta-\varphi)) - \dot{\theta}\cot\alpha\cos(\varphi-\beta) \right) = \Psi^5 \, .
	\end{eqnarray*}
	
	Now using the Lagrange-d'Alembert principle we obtain the nonholonomic equations
	\begin{align*}
		&mR\ddot{\alpha} - mR\dot{\beta}^2\sin\alpha\cos\alpha-mgR\sin\alpha = \lambda_4 \, , \\
		&2mR\dot{\alpha}\dot{\beta}\sin\alpha\cos\alpha + mR\ddot{\beta}\sin^2\alpha = \lambda_5 \, ,\\
		&I\ddot{\varphi} -I\dot{\theta}\dot{\psi}\sin\theta + I\ddot{\psi}\cos\theta = \frac{r}{R}\lambda_5 \, ,\\
		&I\ddot{\psi} - I\dot{\theta}\dot{\varphi}\sin\theta + I\ddot{\varphi}\cos\theta = \frac{-r}{R}(\sin\theta\cos(\beta-\varphi)\lambda_4 - (\cos\theta + \cot\alpha\sin\theta\sin(\beta-\varphi))\lambda_5) \, ,\\
		&I\ddot{\theta} + I\dot{\psi}\dot{\varphi}\sin\theta = \frac{-r}{R}(\sin(\beta-\varphi)\lambda_4 + \cos(\beta-\varphi)\cot\alpha\lambda_5) \, ,\\
		&\dot{\alpha} = \frac{r}{R}\left( \dot{\theta}\sin(\beta-\varphi) + \dot{\psi}\sin\theta\cos(\varphi-\beta) \right) \, ,\\
		&\dot{\beta} = -\frac{r}{R}\left( \dot{\varphi} + \dot{\psi}(\cos\theta + \cot\alpha\sin\theta\sin(\beta-\varphi)) - \dot{\theta}\cot\alpha\cos(\varphi-\beta) \right) \, .
	\end{align*}
	
	We use the constraints to replace $\ddot{\alpha}$, $\ddot{\beta}$, $\dot{\alpha}$ and $\dot{\beta}$ in the first two equations and get the Lagrange multipliers $\lambda_4$ and $\lambda_5$ in terms of $(\varphi,\psi,\theta,\alpha,\beta,\dot{\varphi},\dot{\psi},\dot{\theta},\ddot{\varphi},\ddot{\psi},\ddot{\theta})$. Then we plug these expressions into the next three equations and solve for $\ddot{\varphi}$, $\ddot{\psi}$ and $\ddot{\theta}$ to get the constrained SODE
	\begin{eqnarray*}
		\ddot{\varphi} &=& F^1(\varphi,\psi,\theta,\alpha,\beta,\dot{\varphi},\dot{\psi},\dot{\theta}) \, ,\\
		\ddot{\psi} &=& F^2(\varphi,\psi,\theta,\alpha,\beta,\dot{\varphi},\dot{\psi},\dot{\theta}) \, ,\\
		\ddot{\theta} &=& F^3(\varphi,\psi,\theta,\alpha,\beta,\dot{\varphi},\dot{\psi},\dot{\theta}) \, ,\\
		\dot{\alpha} &=& \frac{r}{R}\left( \dot{\theta}\sin(\beta-\varphi) + \dot{\psi}\sin\theta\cos(\varphi-\beta) \right) \, ,\\
		\dot{\beta} &=& -\frac{r}{R}\left( \dot{\varphi} + \dot{\psi}(\cos\theta + \cot\alpha\sin\theta\sin(\beta-\varphi)) - \dot{\theta}\cot\alpha\cos(\varphi-\beta) \right) \, .
	\end{eqnarray*}	
	The expressions for $F^1,F^2$ and $F^3$ can be computed with Mathematica but are too involved to include them here.
	
	Now we can compute the nonzero coefficients of the Massa-Pagani connection. On one side, if we take the standard metric on $\Sigma$, $R^2d\alpha^2+R^2\sin^2\alpha d\beta^2$, and the corresponding Riemannian connection, we have that the only nonvanishing coefficients are $\Upsilon^4_{55}=-\sin\alpha\cos\alpha$ and $\Upsilon^5_{45}=\frac{\cos\alpha}{\sin\alpha}$, so we get the corresponding nonvanishing coefficients for the Massa-Pagani connection:
	$$
	\tilde{\nabla}_{\frac{\partial}{\partial\beta}}\frac{\partial}{\partial\beta} = -\sin\alpha\cos\alpha\frac{\partial}{\partial\alpha}, \quad
	\tilde{\nabla}_{\frac{\partial}{\partial\alpha}}\frac{\partial}{\partial\beta}=\tilde{\nabla}_{\frac{\partial}{\partial\beta}}\frac{\partial}{\partial\alpha}=\frac{\cos\alpha}{\sin\alpha}\frac{\partial}{\partial\beta}\, .
	$$
	The other components in (\ref{conn cmpts}) are
	\begin{eqnarray*}
		\tilde{\nabla}_{\tGamma}\frac{\partial}{\partial\alpha}&=& \frac{r}{R}\csc^2\alpha\left(\dot{\psi}\sin\theta\sin(\varphi-\beta)+\dot{\theta}\cos(\varphi-\beta)\right)\frac{\partial}{\partial \beta} \, ,\\
		\tilde{\nabla}_{\tGamma}\frac{\partial}{\partial\beta}&=&\frac{-r}{R}\left(\dot{\theta}\cos(\varphi-\beta)+\dot{\psi}\sin\theta\sin(\varphi-\beta)\right)\frac{\partial}{\partial \alpha} \\
		&& + \frac{r}{R}\left( \dot{\theta}\cot\alpha\sin(\beta-\varphi)+\dot{\psi}\cot\alpha\sin\theta\cos(\varphi-\beta)\right)\frac{\partial}{\partial \beta} \, ,\\
		\tilde{\nabla}_{\tH_1}\frac{\partial}{\partial\alpha}&=&0 \, , \quad
		\tilde{\nabla}_{\tH_1}\frac{\partial}{\partial\beta}=0 \, ,\\
		\tilde{\nabla}_{\tH_2}\frac{\partial}{\partial\alpha}&=&-\frac{r}{R}\csc^2\alpha\sin\theta\sin(\varphi-\beta)\frac{\partial}{\partial\beta} \, , \\
		\tilde{\nabla}_{\tH_2}\frac{\partial}{\partial\beta}&=&\frac{r}{R}\sin\theta\sin(\varphi-\beta)\frac{\partial}{\partial\alpha}-\frac{r}{R}\cot\alpha\sin\theta\cos(\varphi-\beta)\frac{\partial}{\partial\beta} \, ,\\
		\tilde{\nabla}_{\tH_3}\frac{\partial}{\partial\alpha}&=&-\frac{r}{R}\csc^2\alpha\cos(\varphi-\beta)\frac{\partial}{\partial\beta} \, ,\\
		\tilde{\nabla}_{\tH_3}\frac{\partial}{\partial\beta}&=&\frac{r}{R}\cos(\varphi-\beta)\frac{\partial}{\partial\alpha}-\frac{r}{R}\cot\alpha\sin(\beta-\varphi)\frac{\partial}{\partial\beta} \, ,
	\end{eqnarray*}
	and $\tnabla_{\tGamma} \tH_a$, $\tnabla_{\tGamma} \tV_a$, $\tnabla_{\tH_a} \tH_b$, $\tnabla_{\tH_a} \tV_b$, which depend on derivatives of $F^1$, $F^2$ and $F^3$.

\end{ex}

\section*{Acknowledgements}
GP and MFP thank Instituto de Ciencias Matem\'aticas (ICMAT) for its warm hospitality. MFP acknowledges financial support from the FWO (Research Foundation - Flanders). DMdD acknowledges financial support from the Spanish Ministry of Science and Innovation, under grants PID2019-106715GB-C21 and the Spanish National Research Council, through the ``Ayuda extraordinaria a Centros de Excelencia Severo Ochoa” R\&D (CEX2019-000904-S).

\medskip

G.E.\,Prince \\
Department of Mathematics and Statistics, La Trobe University,\\
Victoria 3086, Australia \\
Email: \url{g.prince@latrobe.edu.au}

M.\,Farr\'{e} Puiggal\'{i}\\
University of Antwerp, Department of Mathematics,\\
Middelheimlaan 1, 2020 Antwerpen, Belgium \\
Email: \url{marta.farrepuiggali@uantwerpen.be}
%Email: \url{mfarrepu7@gmail.com}

D.J.\,Saunders \\
Lepage Research Institute,\\
17. novembra 1, 081 16 Pre\v{s}ov, Slovakia \\
Email: \url{david@symplectic.email}

D.\,Mart\'{i}n de Diego\\
Instituto de Ciencias Matem\'aticas (CSIC-UAM-UC3M-UCM),\\
C/Nicol\'as Cabrera 13-15, 28049 Madrid, Spain\\
Email: \url{david.martin@icmat.es}

\end{document}